\providecommand{\U}[1]{\protect\rule{.1in}{.1in}}
\newtheorem{theorem}{Theorem}
\newtheorem{definition}[theorem]{Definition}
\newtheorem{example}[theorem]{Example}
\newtheorem{lemma}[theorem]{Lemma}
\newtheorem{proposition}[theorem]{Proposition}
\newtheorem{remark}[theorem]{Remark}
\thanks{}
\email{lvitagliano@unisa.it}
\begin{document}
\title[$L_{\infty}$-algebras from multicontact geometry]{$L_{\infty}$-algebras from multicontact geometry}
\author{Luca Vitagliano}
\address{DipMat, Universit\`a degli Studi di Salerno, {\& Istituto Nazionale di Fisica
Nucleare, GC Salerno,} Via Ponte don Melillo, 84084 Fisciano (SA), Italy.}

\begin{abstract}
I define higher {codimensional} versions of contact structures on manifolds as maximally
non-integrable distributions. I call them multicontact structures. Cartan
distributions on jet spaces provide canonical examples. More generally, I
define higher {codimensional} versions of pre-contact structures as distributions on manifolds
whose characteristic symmetries span a constant dimensional distribution. {I call them pre-multicontact structures.}
Every distribution is almost everywhere, locally, a pre-multicontact
structure. After showing that the standard symplectization of contact
manifolds generalizes naturally to a (pre-)multisymplectization of
(pre-)multicontact manifolds, I make use of results by C. Rogers and M. Zambon
to associate a canonical $L_{\infty}$-algebra to any (pre-)multicontact
structure. Such $L_{\infty}$-algebra is a {multicontact} version of the Jacobi
bracket on a contact manifold. {However, unlike the \emph{multisymplectic $L_\infty$-algebra} of Rogers and Zambon, the \emph{multicontact $L_\infty$-algebra} is always a homological resolution of a Lie algebra.} Finally, I
describe in local coordinates the $L_{\infty}$-algebra associated to the
Cartan distribution on jet spaces.
\end{abstract}
\maketitle

\emph{Keywords}: contact geometry, higher geometry, multisymplectic geometry,
jets, $L_{\infty}$-algebras.

\emph{MSC 2010}: 53D10, 53D05, 58A20, 58A17, 17B55.

\section{Introduction}

A \emph{contact manifold} is a smooth manifold equipped with a \emph{contact
structure}, i.e.~a maximally non-integrable hyperplane distribution. A
canonical example of a contact manifold is provided by the space of first jets
of hypersurfaces in a given manifold. Accordingly, contact geometry, i.e.~the
theory of contact structures, is at the foundation of the theory of first
order partial differential equations in one dependent variable (see, for
instance, \cite{b...99}). Every contact manifold is naturally equipped with a
\emph{Jacobi bundle}, i.e.~a line bundle with a Lie bracket on sections,
which is a first order differential operator in each entry. The Lie algebra of
sections of the Jacobi bundle of a contact manifold is canonically isomorphic
to the Lie algebra of \emph{contact vector fields}, i.e.~infinitesimal
symmetries of the contact structure \cite{cs13,m91}.

On another hand, contact manifolds can be understood as odd dimensional
analogues of symplectic manifolds and there is a close relationship between
contact and symplectic geometry. In particular, every contact manifold can be
`` extended'' in a natural way to a symplectic
manifold, its \emph{symplectization}, encoding all the information about the
contact structure. For instance, the Poisson algebra of the symplectization
`` knows everything'' about the Jacobi bundle
\cite{m91}.

There are higher {degree} versions of symplectic manifolds, namely
\emph{multisymplectic manifolds}. They are smooth manifolds equipped with a
multisymplectic structure, i.e.~a higher degree, closed, non-degenerate
differential form\emph{ }(see, for instance, \cite{cdi99}). A multisymplectic
manifold is sometimes called $n$\emph{-plectic} if its multisymplectic
structure is of degree $n$. Thus, $1$-plectic manifolds are standard
symplectic manifolds. In a similar way as symplectic geometry is at the
foundation of classical mechanics, multisymplectic geometry is at the
foundation of classical field theory. There is a {multisymplectic} analogue of the
Poisson algebra of a symplectic manifold. Namely, every multisymplectic
structure gives rise to an $L_{\infty}$-algebra \cite{r12} (see also
\cite{z12}), i.e.~a cochain complex with a bracket satisfying the Jacobi
identity only up to (a coherent system of higher) homotopies \cite{ls93,lm95}.
In the same way as elements in the Poisson algebra of a symplectic manifold
are interpreted as observables in mechanics, elements in the $L_{\infty}%
$-algebra of a multisymplectic manifold should be interpreted as observables
in field theory \cite{bhr19,frs13}.

In this paper, I introduce higher {codimensional} versions of contact manifolds. I call them
\emph{multicontact manifolds}. They are smooth manifolds equipped with a
\emph{multicontact structure}, i.e.~a maximally non-integrable distribution
of higher codimension. I will call $n$\emph{-contact }a multicontact manifold
whose multicontact structure is $n$-codimensional. Thus, $1$-contact manifolds
are standard contact manifolds. Higher order jet spaces are canonical examples
of multicontact manifolds. Interestingly, there is a nice relationship between
multicontact geometry and multisymplectic geometry: every $n$-contact manifold
can be `` extended'' in a natural way to an
$n$-plectic manifold, its \emph{multisymplectization}, encoding all the
information about the $n$-contact structure. Moreover, there is a {multicontact}
analogue of the Jacobi bundle of a contact manifold. Namely, every
multicontact structure gives rise to an $L_{\infty}$-algebra. The latter, is
in the same relation with the $L_{\infty}$-algebra of the multisymplectization
as the Jacobi bundle of a contact manifold is with the Poisson algebra of the symplectization. 

Finally, recall that, relaxing the non-degeneracy condition in the definition
of a symplect form, one gets the (much more general) notion of
\emph{pre-symplectic form}. A pre-symplectic form is just a closed
differential $2$-form. Similarly, relaxing the non-degeneracy condition in the
definition of a multisymplectic form, one gets the (much more general) notion
of \emph{pre-multisymplectic form}. A pre-multisymplectic form is just a
closed differential form. Pre-multisymplectic forms give rise to $L_{\infty}%
$-algebras of observables as well \cite{z12}. I will show that similar
considerations hold in the `` contact realm'' .
Namely, relaxing the maximality condition in the definition of a contact
distribution, one gets the (much more general) notion of \emph{pre-contact
distribution}. A pre-contact distribution is just an hyperplane distribution.
Similarly, relaxing the maximality condition in the definition of multicontact
distribution, one gets the (much more general) notion of pre-multicontact
distribution. A pre-multicontact distribution is just a distribution
(fulfilling a conceptually unessential, additional, regularity property). As
such, it is a very general notion. Indeed, distributions are ubiquitous in
differential geometry. The main reason is that any partial differential
equation can be understood geometrically as a manifold with a, generically
non-integrable, distribution. Solutions then identify with integral
submanifolds of a suitable dimension. Below, I show that pre-multicontact
distributions give rise to $L_{\infty}$-algebras as well. In an appendix I
also provide coordinate formulas for the higher brackets in the $L_{\infty}%
$-algebras of higher order jet spaces.

{Notice that there is a substantial difference between the multisymplectic and the multicontact cases. Namely, the homology of the \emph{multisymplectic $L_\infty$-algebra} projects onto Hamiltonian vector fields (Definition \ref{def:lHvf}), the kernel of the projection being $(n-1)$-th de Rham cohomologies, and possesses generically non-trivial contributions in positive degrees. As a consequence, it is not \emph{formal}, in general, i.e.~it is not quasi-isomorphic to its homology equipped with the induced Lie bracket. Actually, by homotopy transfer, it induces in homology a new $L_\infty$-algebra structure with generically non-trivial higher operations. In particular, from an homotopical algebra point of view, \emph{the multisymplectic $L_\infty$-algebra contains more information than its cohomology}. In particular, it contains more information than Hamiltonian vector fields (see \cite{v12} for a smooth introduction to the homotopy theory of $L_\infty$-algebras). On the other hand, the \emph{multicontact $L_\infty$-algebra} is always a resolution of the Lie algebra of infinitesimal symmetries of the multicontact structure. In particular, it is formal and, from a homotopical algebra point of view, contains the same information as infinitesimal symmetries.}

\subsection{Notations and conventions}

Let $M$ be a smooth manifold. I denote by $C^{\infty}(M)$ the algebra of
real-valued smooth functions on $M$. Moreover, I denote by $\mathfrak{X}(M)$
vector fields on $M$. I always understand vector fields as derivations of the
algebra $C^{\infty}(M)$. I denote by $\Omega(M)=\bigoplus_{k}\Omega^{k}(M)$
differential forms on $M$ and by $d:\Omega(M)\to\Omega(M)$ the
exterior differential. I denote by $i_{X}$, and $L_{X}$ the ``insertion of a
vector field $X$ into'' and the ``Lie derivative along $X$ of'' differential forms
respectively. If $V\to M$ is a vector bundle over $M$, I denote by
$V^{\ast}\to M$ the dual bundle. If $\upsilon$ is a section of
$V$, I denote by $\upsilon_{x}$ its value at $x\in M$. Finally, I adopt the
Einstein summation convention on pairs of upper-lower indexes.

\section{Distributions, Contact Manifolds and Symplectization\label{SecCont}}

In this section I collect my notations, and basic facts, about distributions
on manifolds. Let $M$ be a smooth manifold and $C$ a regular distribution on it, i.e.,
a linear subbundle of the tangent bundle $TM$ of $M$. I denote by
$C_{x}\subset T_{x}M$ the fiber of $C$ growing over $x\in M$. The \emph{rank}, or \emph{dimension}, of $C$ is $\operatorname{rank} C := \dim C_x$, where $x$ is any point in $M$. The
\emph{annihilator of} $C$ is the linear subbundle $C^{0}$ of the cotangent
bundle $T^{\ast}M$ consisting of $1$-forms vanishing on vectors in $C$. I
denote by $\mathfrak{X}_{C}$ the Lie algebra of \emph{infinitesimal symmetries
of }$C$, i.e.~those vector fields on $M$ whose flow preserves $C$. The Lie
algebra $\mathfrak{X}_{C}$ is also the stabilizer of the subspace $\Gamma(C)$
in the Lie algebra $\mathfrak{X}(M)$ of vector fields on $M$. Denote by
$N:=TM/C$ the normal bundle to $C$. Thus, there is a natural $C^{\infty}%
(M)$-linear projection $\theta:\mathfrak{X}(M)\to\Gamma(N)$, a
short exact sequence
\[
0\longrightarrow\Gamma(C)\longrightarrow\mathfrak{X}(M)\overset{\theta
}{\longrightarrow}\Gamma(N)\longrightarrow0,
\]
and a dual exact sequence
\[
0\longleftarrow\Gamma(C^{\ast})\longleftarrow\Omega^{1}(M)\longleftarrow
\Gamma(C^{0})\longleftarrow0,
\]
where I identified $C^{0}$ with the dual bundle $N^{\ast}$ of $N$. The
\emph{curvature} of $C$ is the well defined skew-symmetric $C^{\infty}%
(M)$-bilinear map
\[
R:\Gamma(C)\times\Gamma(C)\longrightarrow\Gamma(N),\quad(X,Y)\longmapsto
\theta([X,Y]).
\]
Clearly, $C$ is integrable, i.e.~$\Gamma(C)$ is a Lie subalgebra in
$\mathfrak{X}(M)$, iff $R=0$.

The \emph{characteristic distribution} $D$ of $C$ consists of tangent vectors
$\zeta$ in $C$ such that $R(\zeta,-)=0$. In general, $D$ is not regular, i.e.~its rank may change along $M$. Notice that $\Gamma(D)\subset
\mathfrak{X}_{C}$. Accordingly, sections of $D$ are also called
\emph{characteristic symmetries} of $C$. A distribution $C$ such that $D = 0$ is called \emph{maximally non-integrable}. It is easy to see that
$D$ is integrable whenever its rank is constant.

\begin{remark}
[coordinate formulas]\label{19}Given a distribution $C$ on a manifold $M$, I
will always choose coordinates {$(x^{i},z^{a})$} on $M$ which
are adapted to $C$, i.e.~such that $\Gamma(C)$ is locally spanned by vector
fields $C_{i}:=\frac{\partial}{\partial x^{i}}+C_{i}^{a}\frac{\partial
}{\partial z^a}$, and $\Gamma(C^{0})$ is locally spanned by differential
$1$-forms $\vartheta^{a}:=dz^{a}-C_{i}^{a}dx^{i}$, {where $i = 1, \ldots, \operatorname{rank} C$, and $a = 1, \ldots,  \dim M - \operatorname{rank} C$. In the following, indexes $i,j$ will always run in the range $ 1, \ldots, \operatorname{rank} C$, while indexes $a,b$ will run in the range $1, \ldots, \dim M - \operatorname{rank} C$}. Put $\partial_{a}%
:=\partial/\partial z^{a}$. It is easy to see that
\[
\left[  \partial_{a},C_{i}\right]  =\partial_{a}C_{i}^{b}\partial_{b}%
,\quad\text{and\quad}[C_{i},C_{j}]=R_{ij}^{a}\partial_{a},\quad R_{ij}%
^{a}:=C_{i}(C_{j}^{a})-C_{j}(C_{i}^{a}).
\]
Dually,%
\[
d\vartheta^{a}=\partial_{b}C_{i}^{a}dx^{i}\wedge\vartheta^{b}-\dfrac{1}%
{2}R_{ij}^{a}dx^{i}\wedge dx^{j}.
\]
Sections $\ldots,\theta(\partial_{a}),\ldots$ of the normal bundle $N$ (resp.,
sections $dx^{i}|_{C}$ of the dual bundle of $C$) form a local basis.
Locally,
\[
\theta=\vartheta^{a}\otimes\theta(\partial_{a})
\]
and
\[
R=\dfrac{1}{2}R_{ij}^{a}\,dx^{i}|_{C}\wedge dx^{j}|_{C}\otimes\theta
(\partial_{a}).
\]
A tangen vector $\zeta=\zeta^{i}C_{i}$ in $C$ belongs to the characteristic
distribution $D$ iff $R_{ij}^{a}\zeta^{j}=0$.
\end{remark}

Now, recall that a \emph{contact structure}, or a \emph{contact distribution},
is a maximally non-integrable, hyperplane distribution. A \emph{contact
manifold} is a manifold $M$ equipped with a contact distribution $C$. The
normal bundle $N=TM/C$ of a contact distribution $C$ is naturally a
\emph{Jacobi bundle}, or \emph{Jacobi structure}, on $M$, i.e. a line bundle
with a Lie bracket on $\Gamma(N)$ which is a differential operator of order
$1$ in each entry. Indeed, the map $\mathfrak{X}_{C}\to\Gamma(N)$,
$X\mapsto\theta(Y)$, is a vector space isomorphism. In particular,
$\Gamma(N)$ inherits from $\mathfrak{X}_{C}$ a Lie bracket, the \emph{Jacobi
bracket}, with the required bi-differential operator property.

There is a natural way of `` producing'' a
symplectic manifold $(\widetilde{M},\widetilde{\omega})$ from a contact
manifold $(M,C)$, called the \emph{symplectization}. Basically, $(\widetilde
{M},\widetilde{\omega})$ contains a full information about $(M,C)$. Let me
recall the construction of $(\widetilde{M},\widetilde{\omega})$. First of all,
one defines $\widetilde{M}$ as $C^{0}$ with the image of the zero section
removed. In particular, the projection $\pi:\widetilde{M}\to M$ is
a principal bundle with structure group $\mathbb{R}^{\times}$. Now, notice
that $\widetilde{M}$ is a symplectic submanifold in $T^{\ast}M$, i.e.~the
canonical symplectic structure on $T^{\ast}M$ restricts to a symplectic
structure $\widetilde{\omega}$ on $\widetilde{M}$.

Finally, I describe the relationship between the Jacobi bracket $\{-,-\}$ on
$\Gamma(N)$ and the Poisson bracket $\{-,-\}_{\widetilde{M}}$ on $C^{\infty
}(\widetilde{M})$ (see, for instance, \cite{m91}). First, denote by $\Delta$
the Euler vector field on $C^{0}$. Since $\widetilde{M}$ is open in $C$,
$\Delta$ restricts to it. I denote again by $\Delta$ the restriction. It is
the fundamental vector field corresponding to the canonical generator $1$ in
the Lie algebra $\mathbb{R}$ of the structure group $\mathbb{R}^{\times}$. A
function $f$ on $\widetilde{M}$ is \emph{homogeneous} if $\Delta(f)=f$. Now,
sections of $N$ identify with fiberwise linear functions on $C^{0}$, which,
in their turn, restrict to homogeneous functions on $\widetilde{M}$. Since
$\widetilde{M}$ is dense in $C$, the restriction is injective. Summarizing, a
sections $\nu$ of $N$ identifies with a homogeneous function $\widetilde{\nu}$
on $\widetilde{M}$. Moreover,
\begin{equation}
\widetilde{\{\nu_{1},\nu_{2}\}}=\{\widetilde{\nu}_{1},\widetilde{\nu}%
_{2}\}_{\widetilde{M}}. \label{16}%
\end{equation}
In fact, one could use Formula (\ref{16}) as a definition for the Jacobi
bracket on $\Gamma(N)$.

The main aim of this paper is to provide a ``higher {codimensional}
version'' of the content of this section.

\section{Multicontact Manifolds and Multisymplectization}

In this section, I present my proposal of \emph{higher {codimensional} contact structures}. First recall
the definition of higher (pre)symplectic structure. Let $M$ be a smooth
manifold. If $\mu$ is a differential form on $M$, I denote by $\ker
\mu\subset TM$ the (not necessarily constant rank) distribution
spanned by tangent vectors $\xi$ such that $i_{\xi}\mu=0$. If $d\mu=0$, then
$\ker\mu$ is an integrable distribution provided it has constant rank.

\begin{definition}
[\cite{cdi99,frz13}]A closed differential $(n+1)$-form $\omega$ on $M$ is a
\emph{pre-}$n$\emph{-plectic structure} (or \emph{pre-}$n$\emph{-plectic
form}) if $\ker\omega$ is a constant dimensional (integrable) distribution. A
pre-$n$-plectic structure $\omega$ such that $\ker\omega=0$ is an
$n$\emph{-plectic structure} (or $n$\emph{-plectic form}). A manifold equipped
with a (pre-)$n$-plectic structure is a (\emph{pre-})$n$\emph{-plectic
manifold}.
\end{definition}

Often, (pre-)$n$-plectic structures (resp.~forms, manifolds) are collectively
referred to as (\emph{pre-})\emph{multisymplectic} structures (resp.~forms, manifolds).

\begin{example}
Standard symplectic manifolds are $1$\emph{-plectic manifolds}.
\end{example}

\begin{example}
\label{21}Let $M$ be a manifold. On the bundle $\wedge^{n}T^{\ast}M$ of
$n$-forms over $M$ there is a \emph{tautological }$n$\emph{-form} $\theta_{M}$
defined by
\[
(\theta_{M})_{\alpha}=\mathrm{pr}^{\ast}\alpha,\quad \alpha\in\wedge^{n}T^{\ast}M,
\]
$\mathrm{pr}:\wedge^{n}T^{\ast}M\to M$ being the projection. The
exterior differential $\omega_{M}:=d\theta_{M}$ of $\theta_{M}$ is an
$n$-plectic structure on $\wedge^{n}T^{\ast}M$.
\end{example}

\begin{definition}
An $n$-codimensional distribution $C$ on $M$ is a \emph{pre-}$n$\emph{-contact
structure} (or \emph{pre-}$n$\emph{-contact distribution}) if its
characteristic distribution $D$ is constant dimensional. A pre-$n$-contact
structure such that $D=0$ is an $n$\emph{-contact structure} (or
$n$\emph{-contact distribution}). A manifold equipped with a (pre-)$n$-contact
structure is a (\emph{pre-})$n$\emph{-contact manifold}.
\end{definition}

I will collectively refer to (pre-)$n$-contact structures (resp.~forms,
manifolds) as (\emph{pre-})\emph{multicontact} structures (resp.~forms, manifolds).

\begin{example}
Standard contact manifolds (see Section \ref{SecCont}) are $1$-contact manifolds.
\end{example}

\begin{example}
The \emph{Cartan distribution} on a jet space is a multicontact structure (see
Appendix \ref{SecJet}).
\end{example}

\begin{definition}[\cite{r12,frz13}]\label{def:lHvf}
A vector field $X$ on a pre-multisymplectic manifold
$(M,\omega)$ is \emph{locally Hamiltonian }if it preserves $\omega$, i.e.,
$L_{X}\omega=0$. A differential $(n-1)$-form $\mu$ on $(M,\omega)$ is \emph{Hamiltonian} if there exists a vector field $Y$ such that $i_{Y}%
\omega=-d\mu$. Then, $Y$ is called an \emph{Hamiltonian vector field
}associated to $\mu$.
\end{definition}

Clearly, Hamiltonian vector fields are locally Hamiltonian. Moreover, on a
multisymplectic manifold, every Hamiltonian form possesses a unique associated
Hamiltonian vector field. In the following I define the contact analogues of
(locally) Hamiltonian vector fields (see Section \ref{SecHom} for the contact
analogue of Hamiltonian forms).

\begin{definition}
A vector field $X$ on a pre-multicontact manifold $(M,C)$ is
\emph{multicontact} if its flow preserves $C$, i.e.~$X\in\mathfrak{X}_{C}$.
\end{definition}

Let $(M,C)$ be a pre-multicontact manifold. As in Section \ref{SecCont},
denote by $N=TM/C$ the normal bundle and by $\theta:TM\to N$ the
projection. Clearly, the kernel of the map $\theta:\mathfrak{X}_{C}%
\to\Gamma(N)$ consists of sections of $D$. In
particular, $\Gamma(D)$ is an ideal in the Lie algebra $\mathfrak{X}_{C}$, and
the quotient $\mathfrak{X}_{C}/\Gamma(D)$ is a Lie algebra.

\begin{definition}
Elements in the image of $\theta:\mathfrak{X}_{C}\to\Gamma(N)$ are
called \emph{Hamiltonian sections}. Their collection is denoted by
$\Gamma_{\mathrm{Ham}}(N)$.
\end{definition}

Thus, there is a short exact sequence of Lie algebras
\[
0\longrightarrow \Gamma (D) \longrightarrow\mathfrak{X}_{C}\longrightarrow
\Gamma_{\mathrm{Ham}}(N)\longrightarrow0.
\]
When $C$ is a multicontact structure, i.e.~$D=0$, there is an isomorphism
$\mathfrak{X}_{C}\simeq\Gamma_{\mathrm{Ham}}(N)$, $X\mapsto\theta(X)$.

\begin{remark}
[Coordinate formulas]A vector field $X$ on $M$ locally given by (see Remark
(\ref{19})) $X=X^{a}\partial_{a}+X^{i}C_{i}$ is multicontact iff
\[
C_{i}(X^{a})-\partial_{b}C_{i}^{a}\,X^{b}+R_{ij}^{a}X^{j}=0.
\]
Since $X$ projects onto a section $\theta(X)$ of $N$ which is locally given by
$\theta(X)=X^{a}\theta(\partial_{a})$, one concludes that a section $\nu$ of
$N$ locally given by $\nu=\nu^{a}\theta(\partial_{a})$ is Hamiltonian iff
\begin{equation}
\partial_{b}C_{i}^{a}\,\nu^{b}-C_{i}(\nu^{a})=R_{ij}^{a}X^{j} \label{7}%
\end{equation}
for some local functions $X^{j}$. If $D=0$ then the $X^{j}$'s are uniquely defined.
\end{remark}

There is a canonical way how to associate a (pre-)$n$-plectic manifold to a
(pre-)$n$-contact manifold, generalizing the symplectization procedure
described in Section \ref{SecCont}. Namely, let $(M,C)$ be an pre-$n$-contact
manifold. In the bundle $\wedge^{n}T^{\ast}M$ of $n$-forms, with bundle
projection $\mathrm{pr}:\wedge^{n}T^{\ast}M\to M$, consider the
subset $\widetilde{M}$ consisting of $n$-forms $\alpha$ such that
\[
C_x = \{\xi\in T_{x}M:i_{\xi}\alpha=0\},\quad x=\mathrm{pr}(\alpha).
\]
Since $C$ is $n$-codimensional, $\widetilde{M}$ is a $1$-dimensional subbundle
of $\wedge^{n}T^{\ast}M$. Actually, it coincides with $\wedge^{n}C^{0}$
with the image of the $0$ section removed. In particular, it is a principal
$\mathbb{R}^{\times}$-bundle. Denote by $\pi:$ $\widetilde{M}\to
M$ the projection.

As in Example \ref{21}, denote by $\theta_{M}$ the tautological $n$-form on
$\wedge^{n}T^{\ast}M$, and by $\widetilde{\theta}$ its restriction to
$\widetilde{M}$. Put also $\omega_{M}=d\theta_{M}$, and $\widetilde{\omega
}=d\widetilde{\theta}=\omega|_{\widetilde{M}}$. Finally, recall that
$\omega_{M}$ is an $n$-plectic form. {My next aim is to show that $\widetilde \omega$ is a pre-$n$-plectic form and to describe its kernel. In order to do this some preparatory remarks are needed.}

\begin{remark}
\label{20}Let $A\in\Omega^{n}(M)$ be a (local) section of $\widetilde{M}$. A vector
field $X\in\mathfrak{X}(M)$ is multicontact (in the domain of $A$) iff $L_{X}A=fA$ for a smooth
function $f$ on $M$. Indeed, $L_{X}A$ is proportional to $A$ iff $i_{Y}%
L_{X}A=0$ for all $Y\in\Gamma(C)$. But, since $i_{Y}A=0$, then $i_{Y}%
L_{X}A=i_{[Y,X]}A$ which vanishes iff $[Y,X]\in \Gamma (C)$.
\end{remark}

\begin{remark}
\label{3}Let $A\in\Omega^{n}(M)$ be a section of $\widetilde{M}$. Then
$dA|_{C}=0$ unless $n=1$. In particular, except for the contact case,
$dA|_{C}$ is not multisymplectic. Instead, $dA$ has the following degeneracy
property. Let $\zeta \in C_x$, $x \in M$. Then
\begin{equation} \label{1}
\ker i_{\zeta}dA\supset C_x \Longleftrightarrow \zeta \in D_x.
\end{equation}
Indeed let $Z\in\Gamma(C)$ be such that $Z_x = \zeta$, and let $Y \in \Gamma (C)$. Then
\[
i_{Y_x} i_\zeta dA = i_{Y_x} L_Z A = i_{[Y,Z]_x} A.
\]
In particular $i_{Y_x} i_\zeta dA$ vanishes for all $Y \in \Gamma (C)$ iff $[Y,Z]_x \in C_x$, i.e.
$
0 = \theta ([Y,Z]_x) = - R (\zeta, Y_x). 
$
It follows from the arbitrariness of $Y$ that $\ker i_{\zeta}dA\supset C_x$ iff $R (\zeta , -) = 0$ iff $\zeta \in \Gamma (D)$.
\end{remark}

\begin{remark}
[coordinate formulas]Using the same notations as in Section \ref{SecCont},
Remark \ref{19}, put
\[
\Theta:=\vartheta^{1}\wedge\cdots\wedge\vartheta^{n},\text{\quad and\quad
}\Theta_{a}:=i_{\partial_{a}}\Theta,
\]
and notice that
\[
d\Theta=\partial_{a}C_{i}^{a}dx^{i}\wedge\Theta-\dfrac{1}{2}R_{ij}^{a}%
dx^{i}\wedge dx^{j}\wedge\Theta_{a}.
\]
A section $A$ of $\widetilde{M}$ is locally of the form
\[
A=f\,\Theta,\quad f\in C^{\infty}(M),\quad f\neq0.
\]
Then
\[
dA=\left(  C_{i}(f)+f\,\partial_{a}C_{i}^{a}\right)  dx^{i}\wedge\Theta
-\dfrac{1}{2}R_{ij}^{a}dx^{i}\wedge dx^{j}\wedge\Theta_{a}.
\]

\end{remark}

Now, let $A$ be a local section of $\widetilde M$, and $Z\in\Gamma(D)$. In particular $A$ is a differential forms. Clearly,
$L_{Z}A\in\Gamma(\wedge^{n}C^{0})$. Moreover $L_{Z}A$ is $C^{\infty}
(M)$-linear in $Z$.    Section $A$ is called \emph{$D$-flat}
if $L_Z A = 0$ for all $Z \in \Gamma (D)$. It is not hard to see, for instance using local coordinates adapted to $D$, that $\widetilde M$ can be locally foliated by images of $D$-flat sections. In particular, for any $\alpha \in \widetilde M$ there exists a $D$-flat section $A$ of $\widetilde M$ locally defined around $x := \pi (\alpha)$ such that $A_x = \alpha$.
\begin{lemma}
Let $\alpha \in \widetilde M$. Moreover, let $A, A'$ be $D$-flat sections of $\widetilde M$ locally defined around $x := \pi (\alpha)$ such that $A_x = A'_x = \alpha$. Then $A_\ast (D_x) = A'_\ast (D_x)$.
\end{lemma}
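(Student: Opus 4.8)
\emph{Proof proposal.} The plan is to prove the stronger, pointwise statement that $A_\ast \zeta = A'_\ast \zeta$ for every $\zeta \in D_x$; since $A_\ast(D_x)$ and $A'_\ast(D_x)$ are the images of $D_x$ under $A_\ast$ and $A'_\ast$, pointwise equality immediately gives equality of the two subspaces. Recall that $\pi \colon \widetilde M \to M$ is a principal $\mathbb{R}^\times$-bundle, so at $\alpha$ the tangent space $T_\alpha \widetilde M$ carries the vertical line $V_\alpha = \ker \pi_{\ast\alpha}$, spanned by the Euler field $\Delta$. Because $A, A'$ are sections through $\alpha$, we have $\pi_{\ast\alpha} \circ A_{\ast x} = \mathrm{id} = \pi_{\ast\alpha} \circ A'_{\ast x}$; in particular, for $\zeta \in D_x$ both $A_\ast \zeta$ and $A'_\ast \zeta$ project onto $\zeta$, so their difference $A_\ast \zeta - A'_\ast \zeta$ lies in $V_\alpha$, i.e.~is a multiple of $\Delta_\alpha$. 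It thus remains to show that this vertical component vanishes.

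To compute it I would pass to a local trivialization. Choose, as in Remark \ref{19}, the nowhere-vanishing local section $\Theta = \vartheta^1 \wedge \cdots \wedge \vartheta^n$ of $\wedge^n C^0$; it is itself a local section of $\widetilde M$ and trivializes $\pi$ as $\widetilde M \cong U \times \mathbb{R}^\times$, with $\Delta$ the fundamental field of the scaling action. Write $A = f\,\Theta$ and $A' = f'\,\Theta$ with $f, f'$ nowhere vanishing; the hypothesis $A_x = A'_x = \alpha$ forces $f(x) = f'(x) =: t_0 \neq 0$. In this trivialization $A_\ast \zeta$ has $\mathbb{R}^\times$-component $\zeta(f)$ and $A'_\ast \zeta$ has $\mathbb{R}^\times$-component $\zeta(f')$, so the vertical difference is $t_0^{-1}\bigl(\zeta(f) - \zeta(f')\bigr)\Delta_\alpha$. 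Hence the lemma reduces to the scalar equality $\zeta(f) = \zeta(f')$ for all $\zeta \in D_x$.

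This last equality is where $D$-flatness enters. Since $\Gamma(D) \subset \mathfrak{X}_C$, every $Z \in \Gamma(D)$ is multicontact, so applying Remark \ref{20} to the section $\Theta$ gives $L_Z \Theta = h_Z\,\Theta$ for a function $h_Z$ depending only on $Z$ and $\Theta$, \emph{not} on $A$ or $A'$. Consequently
\[
L_Z A = \bigl(Z(f) + f\,h_Z\bigr)\Theta, \qquad L_Z A' = \bigl(Z(f') + f'\,h_Z\bigr)\Theta,
\]
and $D$-flatness of $A$ and of $A'$ forces $Z(f) = -f\,h_Z$ and $Z(f') = -f'\,h_Z$. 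Finally, as $D$ is constant dimensional it is an honest subbundle, so I may extend $\zeta$ to a section $Z \in \Gamma(D)$ with $Z_x = \zeta$; evaluating the two identities at $x$ and using $f(x) = f'(x) = t_0$ yields $\zeta(f) = -t_0\,h_Z(x) = \zeta(f')$, as required.

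The only genuinely delicate point is the proportionality $L_Z \Theta \propto \Theta$ for characteristic symmetries $Z$: it is precisely this that makes the factor $h_Z$ common to $A$ and $A'$, so that it cancels once $f(x) = f'(x)$ is used. This proportionality is exactly the content of Remark \ref{20} (alternatively, it can be read off the coordinate formula for $d\Theta$, the $R$-terms dropping out because $R_{ij}^a \zeta^j = 0$ on $D$). Everything else is routine bookkeeping in the chosen trivialization.
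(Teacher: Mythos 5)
Your proof is correct, but it takes a genuinely different route from the paper's. The paper argues via the canonical lift: for $Z \in \Gamma(D) \subset \mathfrak{X}_C$, the natural lift of $Z$ to $\wedge^n T^\ast M$ (the one preserving the tautological form) is tangent to $\wedge^n C^0$ and restricts to a vector field $\widetilde{Z}$ on $\widetilde{M}$; $D$-flatness ($L_Z A = L_Z A' = 0$) then says the flow of $\widetilde{Z}$ preserves the images of both sections, whence $A_\ast(Z_x) = \widetilde{Z}_\alpha = A'_\ast(Z_x)$. You instead never construct the lift: you observe that the discrepancy $A_\ast\zeta - A'_\ast\zeta$ is vertical, trivialize by $\Theta$, and kill the scalar discrepancy using the proportionality $L_Z\Theta = h_Z\,\Theta$ from Remark \ref{20} (legitimate, since $\Gamma(D) \subset \mathfrak{X}_C$ and $\Theta$ is a local section of $\widetilde{M}$), the point being that $h_Z$ is common to $A$ and $A'$ so that $Z(f) = -f h_Z$ and $Z(f') = -f' h_Z$ agree at $x$ once $f(x) = f'(x)$. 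Your extension of $\zeta$ to $Z \in \Gamma(D)$ is justified because $D$ has constant rank in the pre-multicontact setting, and your vertical-component computation ($\Delta = p\,\partial/\partial p$, so the difference is $t_0^{-1}(\zeta(f)-\zeta(f'))\Delta_\alpha$) is accurate. What each approach buys: yours is more elementary --- pure bookkeeping in a trivialization, no flows or lifted fields --- while the paper's identifies the common value $A_\ast(Z_x)$ as $\widetilde{Z}_\alpha$, the value of a canonically defined vector field on $\widetilde{M}$; that extra structural information is what makes the subsequent claims about $\widetilde{D}$ (smoothness as a distribution spanned by the $\widetilde{Z}$'s, and its later coordinate description) immediate, whereas from your argument alone the regularity of $\alpha \mapsto \widetilde{D}_\alpha$ would still need the foliation-by-$D$-flat-sections remark that the paper invokes separately.
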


\begin{proof}
Let $Z$ be a section of $D$. In particular $Z \in \mathfrak X_C$ and the natural lift of $Z$ to $\wedge^n T^\ast M$ is tangent to $\widetilde M \subset \wedge^{n}C^{0} \subset \wedge^n T^\ast M$. Denote by $\widetilde Z$ its restriction to $\widetilde M$. It follows from $L_Z A = L_Z A'$, that the flow of $\widetilde Z$ preserves both the image of $A$ of the image of $A'$. Hence, $A_\ast (Z_x) = \widetilde Z_\alpha = A'_\ast (Z_x)$.
\end{proof}

Now, let $\alpha \in \widetilde M$. Take a $D$-flat section $A$ of
$\widetilde{M}$, locally defined around $x=\pi(\alpha)$, such that
$A_{x}=\alpha$, and put $\widetilde{D}_{\alpha}:=A_{\ast}%
(D_{x})$. In view of the above lemma, $\widetilde D_\alpha$ is independent of the choice of $A$. Moreover, since $\widetilde M$ can be locally foliated by images of $D$-flat sections, the assignment $\widetilde D : \alpha \mapsto \widetilde D_\alpha$ is a regular distribution on $\widetilde M$, projecting pointwise isomorphically onto $D$. In particular $\widetilde D$ has the same (constant) rank as $D$.

I'm finally ready to prove the main result of this section.

\begin{theorem}
\label{8}The canonical form $\widetilde{\omega}$ on $\widetilde{M}$ is
pre-$n$-plectic and $\ker\widetilde{\omega}=\widetilde{D}$.
\end{theorem}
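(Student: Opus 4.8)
The plan is to prove the two assertions separately: first that $\widetilde{\omega}$ is closed (immediate, since $\widetilde{\omega} = d\widetilde{\theta}$) and of constant-rank kernel, and second that this kernel is exactly $\widetilde{D}$. Since $\widetilde{\omega} = \omega_M|_{\widetilde{M}}$ is the pullback along the inclusion $\iota : \widetilde{M} \hookrightarrow \wedge^n T^\ast M$ of a closed form, it is automatically closed, so the only real content is identifying $\ker \widetilde{\omega}$ and checking it has constant rank. Because I already know $\widetilde{D}$ is a regular (constant-rank) distribution projecting isomorphically onto $D$, if I can establish the set-theoretic equality $\ker \widetilde{\omega} = \widetilde{D}$ pointwise, then constant rank of the kernel follows for free, and with it the pre-$n$-plectic property.

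First I would reduce the computation of $\widetilde{\omega}_\alpha$ to $M$-level data. Fix $\alpha \in \widetilde{M}$, set $x = \pi(\alpha)$, and choose a $D$-flat section $A$ with $A_x = \alpha$. The key observation is that, by the defining property of the tautological form, $A^\ast \widetilde{\theta} = A^\ast \theta_M = A$ as forms on $M$ (near $x$), whence $A^\ast \widetilde{\omega} = d(A^\ast \widetilde{\theta}) = dA$. Thus pulling back $\widetilde{\omega}$ along the section $A$ recovers precisely the form $dA$ whose degeneracy was analyzed in Remark \ref{3}. Tangent vectors to $\widetilde{M}$ at $\alpha$ split (via any such $A$ together with the vertical Euler direction) into the image $A_\ast(T_xM)$ and the fiber direction; I would show the fiber/vertical directions do not lie in $\ker\widetilde{\omega}$, so that $\ker\widetilde{\omega}_\alpha$ is contained in $A_\ast(T_xM)$, and then transport the kernel condition back to $M$ through $A_\ast$.

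The heart of the argument is then the pointwise chain of equivalences
\[
A_\ast \zeta \in \ker \widetilde{\omega}_\alpha \iff i_\zeta (A^\ast \widetilde{\omega}) = 0 \iff i_\zeta \, dA = 0,
\]
valid for $\zeta \in T_x M$. Remark \ref{3} shows that for $\zeta \in C_x$ one has $\ker i_\zeta\, dA \supset C_x$ exactly when $\zeta \in D_x$; I would upgrade this to the statement that $i_\zeta\, dA = 0$ (as opposed to merely $\ker i_\zeta\, dA \supset C_x$) holds precisely for $\zeta \in D_x$, and that no $\zeta \notin C_x$ can kill $dA$ entirely. For the latter, if $\zeta \notin C_x$ then $i_\zeta A \neq 0$ on $C_x$ (since $C_x = \ker A_x$ by definition of $\widetilde{M}$), and the coordinate expression $dA = (C_i(f) + f\,\partial_a C_i^a)\,dx^i \wedge \Theta - \tfrac12 R_{ij}^a\, dx^i \wedge dx^j \wedge \Theta_a$ makes the nonvanishing of $i_\zeta\, dA$ transparent. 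Using $D$-flatness, $L_Z A = 0$ for $Z \in \Gamma(D)$ gives $i_Z\, dA = -d(i_Z A) = 0$ since $i_Z A = 0$, confirming $D_x \subset \ker(i_{(\cdot)} dA)$; combined with the converse from Remark \ref{3}, this pins down $\{\zeta : i_\zeta dA = 0\} = D_x$, and hence $\ker\widetilde{\omega}_\alpha = A_\ast(D_x) = \widetilde{D}_\alpha$.

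The step I expect to be the main obstacle is controlling the \emph{vertical} (fiber) direction of $\widetilde{M} \to M$: I must verify that the Euler direction does not lie in $\ker\widetilde{\omega}$, so that the kernel is genuinely horizontal and the reduction to $dA$ on $M$ captures it completely. This is where the analogue of the symplectic case enters, and where I would use that $\widetilde{\theta}$ is homogeneous of the appropriate weight under the $\mathbb{R}^\times$-action (equivalently $L_\Delta \widetilde{\theta} = \widetilde{\theta}$), giving $i_\Delta \widetilde{\omega} = i_\Delta\, d\widetilde{\theta} = L_\Delta \widetilde{\theta} - d(i_\Delta \widetilde{\theta}) = \widetilde{\theta}$, which is nonzero; hence $\Delta \notin \ker\widetilde{\omega}$. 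Once verticals are excluded and horizontals are handled by the $dA$ computation above, the equality $\ker\widetilde{\omega} = \widetilde{D}$ follows, and constant rank is inherited from $\widetilde{D}$, completing the proof that $\widetilde{\omega}$ is pre-$n$-plectic.
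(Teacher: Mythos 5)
Your frame is sound in places --- closedness of $\widetilde{\omega}=d\widetilde{\theta}$, deducing constant rank from the pointwise identity $\ker\widetilde{\omega}=\widetilde{D}$ once it is established, the pullback identity $A^{\ast}\widetilde{\omega}=dA$ for a section $A$, and the computation $i_{\Delta}\widetilde{\omega}=\widetilde{\theta}\neq 0$ --- but the heart of your argument rests on two steps that fail. First, the chain $A_{\ast}\zeta\in\ker\widetilde{\omega}_{\alpha}\iff i_{\zeta}\,dA=0$ is not an equivalence: $i_{\zeta}\,dA=0$ only records the vanishing of $\widetilde{\omega}(A_{\ast}\zeta,\cdot,\ldots,\cdot)$ on tuples of \emph{horizontal} vectors $A_{\ast}\xi_{i}$, whereas membership in the kernel also requires vanishing when some arguments are vertical. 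Your auxiliary claim that no $\zeta\notin C_{x}$ can kill $dA$ (``transparent from the coordinate expression'') is simply false: for $n=1$ take $A=\vartheta$ a contact form (vacuously $D$-flat since $D=0$); the Reeb field $\zeta$ satisfies $i_{\zeta}\,dA=0$ and $\zeta\notin C_{x}$, yet $\ker\widetilde{\omega}=0$ because $\widetilde{\omega}$ is symplectic. The same happens for $n\geq 2$: on $\mathbb{R}^{4}$ with $C$ spanned by $\partial_{x^{1}}$ and $\partial_{x^{2}}+x^{1}\partial_{z^{1}}$, and $A=(dz^{1}-x^{1}dx^{2})\wedge dz^{2}$, one has $i_{\partial_{z^{1}}}dA=0$ with $\partial_{z^{1}}\notin C$ and $D=0$. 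Remark \ref{3} cannot rescue you here: its equivalence is stated under the hypothesis $\zeta\in C_{x}$, and it concerns $\ker i_{\zeta}\,dA\supset C_{x}$, not $i_{\zeta}\,dA=0$. Second, your reduction ``verticals are not in the kernel, hence $\ker\widetilde{\omega}_{\alpha}\subset A_{\ast}(T_{x}M)$'' is a non sequitur: the kernel of a skew form need not be adapted to a direct-sum decomposition, so a kernel vector could be $A_{\ast}(\xi)+c\Delta_{\alpha}$ with both components nonzero, neither lying in the kernel separately.

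Both holes are plugged by the one ingredient your proposal never introduces in general: the identity $i_{v}\widetilde{\omega}=\pi^{\ast}(v)$ for an \emph{arbitrary} vertical vector $v$ at $\alpha$, viewed as an $n$-form on $T_{x}M$ with $\ker v\supset C_{x}$ (your $i_{\Delta}\widetilde{\omega}=\widetilde{\theta}$ is the special case $v=\alpha$). This is exactly how the paper argues: write a candidate kernel vector as $\eta=A_{\ast}(\xi)+v$ and test $\widetilde{\omega}(\eta,\eta_{1},\ldots,\eta_{n})$ on \emph{mixed} tuples. Taking one vertical argument forces $\alpha(\xi,\xi_{2},\ldots,\xi_{n})=0$ for all $\xi_{j}$, i.e.\ $\xi\in\ker\alpha=C_{x}$; then horizontal arguments with $\xi_{1}\in C_{x}$ kill the $v$-terms and give $\ker i_{\xi}\,dA\supset C_{x}$, so Remark \ref{3} now legitimately yields $\xi\in D_{x}$; finally $D$-flatness gives $0=dA(\xi,\xi_{1},\ldots,\xi_{n})+v(\xi_{1},\ldots,\xi_{n})=v(\xi_{1},\ldots,\xi_{n})$, whence $v=0$. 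The same vertical tests are also needed (and are easy) in your forward inclusion $\widetilde{D}_{\alpha}\subset\ker\widetilde{\omega}_{\alpha}$: besides $i_{\zeta}\,dA=0$, you must check $\widetilde{\omega}(v,A_{\ast}\zeta,\ldots)=v(\zeta,\ldots)=0$, which holds because $\zeta\in D_{x}\subset C_{x}\subset\ker v$. With these corrections your argument becomes essentially the paper's proof; as written, the reverse inclusion does not go through.
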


\begin{proof}
Notice preliminarily that vertical tangent vectors to $\wedge^{n}T^{\ast}M$
identify with points in $\wedge^{n}T^{\ast}M$. Similarly, vertical tangent
vectors to $\widetilde{M}$ at $\alpha$ identify with elements $v\in\wedge
^{n}T^{\ast}M$ such that $\ker v\supset C_{x}$, $x=\pi(\alpha)$. In the following,
I will understand such identifications. It is easy to see that 
\[
i_{v} \omega=\pi^{\ast}(v),
\] for all vertical tangent vectors $v$ to $\widetilde{M}$, where, in the rhs I interpret $v$ as a vertical vector, while, in the lhs I interpret $v$ as an $n$-form on $M$.

Now, let $A$ be a local $D$-flat section of $\widetilde{M}$ such that $A_{x}= \alpha$, so that $\widetilde{D}_{\alpha}:=A_{\ast}(D_{x})$. Pick $\zeta\in
D_{x}$ and $A_{\ast}(\zeta)\in\widetilde{D}_{\alpha}$. Show that $i_{A_{\ast}%
(\zeta)}\widetilde{\omega}=0$. It is enough to show that
\[
\widetilde{\omega}(v,A_{\ast}(\zeta),A_{\ast}(\xi_{1}),\ldots,A_{\ast}%
(\xi_{n-1}))=\widetilde{\omega}(A_{\ast}(\zeta),A_{\ast}(\xi_{1}%
),\ldots,A_{\ast}(\xi_{n}))=0
\]
for all vertical tangent vectors $v$ to $\widetilde{M}$ at $\alpha$, and $\xi
_{1},\ldots,\xi_{n}\in T_{x}M$. Now,
\[
\widetilde{\omega}(v,A_{\ast}(\zeta),A_{\ast}(\xi_{1}),\ldots,A_{\ast}%
(\xi_{n-1}))=v(\zeta,\xi_{1},\ldots,\xi_{n-1})=0
\]
since $\zeta\in D_{x}\subset C_{x}$. Moreover, let $Z \in \Gamma (D)$ be such that $Z_x = \zeta$. Then
\[
\widetilde{\omega}(A_{\ast}(\zeta),A_{\ast}(\xi_{1}),\ldots,A_{\ast}(\xi
_{n}))=dA(\zeta,\xi_{1},\ldots,\xi_{n})= (L_Z A)(\xi_{1},\ldots,\xi
_{n})=0.
\]

Conversely, let $\eta\in T_{\alpha}\widetilde{M}$ be such that $i_{\eta}%
\widetilde{\omega}$. In other words,
\[
\widetilde{\omega}(\eta,\eta_{1},\ldots,\eta_{n})=0\quad\text{for all }%
\eta_{1},\ldots,\eta_{n}\in T_{\alpha}\widetilde{M}.
\]
Put $\xi=\pi_{\ast}(\eta)$, and $\xi_{i}=\pi_{\ast}(\eta_{i})$,
$i=1,\ldots,n$. Then $\eta=A_{\ast}(\xi)+v$ and $\eta_{i}=A_{\ast}(\xi
_{i})+v_{i}$ for $v,v_{i}$ vertical tangent vectors to $\widetilde{M}$,
$i=1,\ldots,n$. Hence
\begin{align}
0  &  =\widetilde{\omega}(\eta,\eta_{1},\ldots,\eta_{n})\nonumber\\
&  =\widetilde{\omega}(A_{\ast}(\xi),\eta_{1},\ldots,\eta_{n})+v(\xi
_{1},\ldots,\xi_{n})\nonumber\\
&  =\widetilde{\omega}(A_{\ast}(\xi),A_{\ast}(\xi_{1}),\eta_{2},\ldots
,\eta_{n})-v_{1}(\xi,\xi_{2},\ldots,\xi_{n})+v(\xi_{1},\ldots,\xi_{n}). \label{2}%
\end{align}
Choosing $\xi_{1}=0$ one immediately see that $\xi\in C_{x}$, and (\ref{2})
simplifies to
\[
\widetilde{\omega}(A_{\ast}(\xi),A_{\ast}(\xi_{1}),\eta_{2},\ldots,\eta
_{n})+v(\xi_{1},\ldots,\xi_{n})=0.
\]
Now, choosing $\xi_{1}\in C_{x}$ and $v_{i}=0$ for $i>1$, one sees that, in
view, of Remark \ref{3}, $\xi\in D_{x}$. Finally, choose $v_{i}=0$ for $i>1$, and let $X \in \Gamma (D)$ be such that $\xi = X_x$
Then
\begin{align*}
0  &  =\widetilde{\omega}(A_{\ast}(\xi),A_{\ast}(\xi_{1}),\eta_{2},\ldots
,\eta_{n})+v(\xi_{1},\ldots,\xi_{n})\\
&  =dA(\xi,\xi_{1},\ldots,\xi_{n})+v(\xi_{1},\ldots,\xi_{n})\\
&  =(L_X A)(\xi_{1},\ldots,\xi_{n})+v(\xi_{1},\ldots,\xi_{n})\\
&  =v(\xi_{1},\ldots,\xi_{n}).
\end{align*}
Hence $v=0$. Concluding, $\eta=A_{\ast}(\xi)$, with $\xi\in D_{x}$.
\end{proof}

In particular $\widetilde{\omega}$ is a multisymplectic structure iff $C$ is a
multicontact structure.

\begin{definition}
The pair $(\widetilde{M},\widetilde{\omega})$ is called the
\emph{(pre-)multisymplectization of} $(M,C)$.
\end{definition}

\begin{remark}
[coordinate formulas]On $\widetilde{M}$ one can choose coordinates
$(x^{i},z^{a},p)$, where $p$ is implicitly defined by
$\alpha=p(\alpha)\Theta\in\widetilde{M}$. Notice that $p\neq0$. Locally, $\widetilde
{\theta}=p\Theta$,
\[
\widetilde{\omega}=\left(  dp+p\,\partial_{a}C_{i}^{a}dx^{i}\right)
\wedge\Theta-\dfrac{1}{2}pR_{ij}^{a}dx^{i}\wedge dx^{j}\wedge\Theta_{a},
\]
and a direct computation shows that $\widetilde{D}$ is locally generated by
vector fields of the form
\[
Z^{i}\left(  C_{i}-p\,\partial_{a}C_{i}^{a}\dfrac{\partial}{\partial
p}\right)  ,\quad\text{with }R_{ij}^{a}Z^{j}=0.
\]

\end{remark}

Now, I discuss the relationship between contact vector fields of $(M,C)$ and
locally Hamiltonian vector fields on the pre-multisymplectization. Let $X$ be
a \emph{multicontact vector field}, i.e.~$X\in\mathfrak{X}_{C}$. Then $X$ can be
naturally lifted to a vector field $\widetilde{X}$ on $\widetilde{M}$. Namely,
$X$ lifts to a unique vector field $X^{\ast}$ on $\wedge^{n}T^{\ast}M$
preserving the tautological $n$-form. It follows from multicontactness that
$X^{\ast}$ is actually tangent to $\wedge^{n}C^{0}$. In particular, it
restricts to a vector field $\widetilde{X}$ on $\widetilde{M}$ (which is open
in $\wedge^{n}C^{0}$). Clearly, $\widetilde{X}$ preserves the tautological
$n$-form on $\widetilde{M}$ and, therefore, it preserves $\widetilde{\omega}$.
Conversely, let $Y$ be a locally Hamiltonian vector field on $(\widetilde
{M},\widetilde{\omega})$. The next proposition shows, in particular, that, if
$Y$ is projectable onto $M$, then its projection is a multicontact vector field.

\begin{proposition}
\label{10}Let $Y$ be a locally Hamiltonian vector field on $(\widetilde{M},\widetilde
{\omega})$. If $Y$ projects on a vector field $X$ on $M$, then $X$ is
multicontact. Moreover, if $R\neq0$, then $Y=\widetilde{X}$.
\end{proposition}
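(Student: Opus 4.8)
The plan is to prove the two assertions separately, in both cases exploiting the interplay between the tautological form $\widetilde{\theta}$, the Euler vector field $\Delta$ of the principal bundle $\pi\colon\widetilde M\to M$, and the identity $i_{v}\widetilde{\omega}=\pi^{\ast}(v)$ for vertical $v$ already recorded in the proof of Theorem \ref{8}. Throughout I will use that $\Delta$ is vertical, that $i_{\Delta}\widetilde{\theta}=0$, and that $\widetilde{\theta}$ is homogeneous of degree $1$, so that $L_{\Delta}\widetilde{\theta}=\widetilde{\theta}$ and hence $i_{\Delta}\widetilde{\omega}=\widetilde{\theta}$.

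First I would show that $X$ is multicontact. By Remark \ref{20} it is enough to prove $L_{X}A\propto A$ for every local section $A$ of $\widetilde M$. The crucial observation is that, since $Y$ is locally Hamiltonian, $L_{Y}\widetilde{\theta}$ is a \emph{horizontal} form: from $L_{Y}\widetilde{\omega}=0$ and the Cartan identity $L_{Y}i_{\Delta}-i_{\Delta}L_{Y}=i_{[Y,\Delta]}$ one gets
\[
0=i_{\Delta}L_{Y}\widetilde{\omega}=L_{Y}i_{\Delta}\widetilde{\omega}-i_{[Y,\Delta]}\widetilde{\omega}=L_{Y}\widetilde{\theta}-i_{[Y,\Delta]}\widetilde{\omega},
\]
so $L_{Y}\widetilde{\theta}=i_{[Y,\Delta]}\widetilde{\omega}$; and $[Y,\Delta]$ is vertical, because $Y$ projects to $M$ while $\Delta$ is vertical, whence by $i_{v}\widetilde{\omega}=\pi^{\ast}(v)$ the form $L_{Y}\widetilde{\theta}$ is pulled back from $\wedge^{n}C^{0}$ along $\pi$. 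Denoting by $\phi_{t},\psi_{t}$ the flows of $Y,X$, so that $\pi\phi_{t}=\psi_{t}\pi$, the family $A_{t}:=\phi_{-t}\circ A\circ\psi_{t}$ consists of sections of the line bundle $\widetilde M\subset\wedge^{n}C^{0}$, hence $A_{t}=f_{t}A$ with $f_{0}=1$; differentiating the tautological identity $A_{t}^{\ast}\widetilde{\theta}=A_{t}$ at $t=0$ yields
\[
\dot f_{0}\,A=\tfrac{d}{dt}\big|_{0}A_{t}^{\ast}\widetilde{\theta}=L_{X}A-A^{\ast}L_{Y}\widetilde{\theta}.
\]
Since $L_{Y}\widetilde{\theta}$ is horizontal, $A^{\ast}L_{Y}\widetilde{\theta}$ is again a section of $\wedge^{n}C^{0}$, i.e.\ proportional to $A$; therefore $L_{X}A\propto A$ and $X\in\mathfrak{X}_{C}$.

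For the second assertion, since $X$ is now multicontact it admits the canonical lift $\widetilde X$, which preserves $\widetilde{\theta}$ (hence $\widetilde{\omega}$) and projects to $X$. Thus $V:=Y-\widetilde X$ projects to $0$, i.e.\ $V$ is vertical, and $L_{V}\widetilde{\omega}=0$. Writing $V=P\,\partial/\partial p$ in the coordinates $(x^{i},z^{a},p)$, so that $i_{V}\widetilde{\omega}=P\,\Theta$, I would compute
\[
0=L_{V}\widetilde{\omega}=d(P\,\Theta)=dP\wedge\Theta+P\,d\Theta,
\]
expanding with $d\Theta=\partial_{a}C_{i}^{a}\,dx^{i}\wedge\Theta-\tfrac12R_{ij}^{a}\,dx^{i}\wedge dx^{j}\wedge\Theta_{a}$ and $\vartheta^{a}\wedge\Theta=0$. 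The coefficient of $dp\wedge\Theta$ forces $\partial P/\partial p=0$, so $P$ descends to $M$, while the component with two factors $dx$ and $n-1$ factors $\vartheta$ is $-\tfrac12P R_{ij}^{a}\,dx^{i}\wedge dx^{j}\wedge\Theta_{a}$; its vanishing forces $P\,R_{ij}^{a}=0$ for all $i,j,a$, the forms $dx^{i}\wedge dx^{j}\wedge\Theta_{a}$ ($i<j$) being linearly independent. Hence $R\neq0$ implies $P=0$, i.e.\ $Y=\widetilde X$.

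I expect the main obstacle to be the first part, namely translating the Hamiltonian condition for $Y$ on $\widetilde M$ into the multicontact condition for its projection $X$ on $M$. The clean bridge is the horizontality of $L_{Y}\widetilde{\theta}$ extracted from the Euler field identity; once that is available, the tautological property $A^{\ast}\widetilde{\theta}=A$ transports the statement down to $M$. The second part is then a short computation whose only delicate point is isolating the curvature term $R_{ij}^{a}$, which is exactly the quantity detecting the failure of $Y$ to coincide with the canonical lift.
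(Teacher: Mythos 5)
Your proposal is correct, and its engine is the same as the paper's: both proofs extract from $i_{\Delta}L_{Y}\widetilde{\omega}=0$ the identity relating $L_{Y}\widetilde{\theta}$ to $i_{[Y,\Delta]}\widetilde{\omega}$, use verticality of $[Y,\Delta]$ to conclude $L_{Y}\widetilde{\theta}=f\widetilde{\theta}$, and then kill the vertical difference $Y-\widetilde{X}$ with the curvature. The differences are in execution. For multicontactness of $X$, the paper stays upstairs: since $\widetilde{C}:=\pi_{\ast}^{-1}(C)=\ker\widetilde{\theta}$, it computes $i_{[Y,Z]}\widetilde{\theta}=i_{Z}L_{Y}\widetilde{\theta}=f\,i_{Z}\widetilde{\theta}=0$ for $Z\in\Gamma(\widetilde{C})$, so $Y$ preserves $\widetilde{C}$ and hence $X$ preserves $C$ --- two lines, versus your descent through the flows, the family $A_{t}=f_{t}A$, and the tautological identity $A^{\ast}\widetilde{\theta}=A$ landing on Remark \ref{20}; your route is heavier but valid, and it buys an explicit formula for $L_{X}A$ in terms of $L_{Y}\widetilde{\theta}$. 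One wording to tighten there: ``horizontality'' (semibasicness) of $L_{Y}\widetilde{\theta}$ alone would not make $A^{\ast}L_{Y}\widetilde{\theta}$ valued in $\wedge^{n}C^{0}$; what you actually need, implicit in your appeal to $i_{v}\widetilde{\omega}=\pi^{\ast}(v)$, is that vertical tangent vectors to $\widetilde{M}$ identify with elements of $\wedge^{n}C^{0}$ --- equivalently, that $[Y,\Delta]=f\Delta$ because the vertical bundle is the line spanned by $\Delta$, which gives $i_{[Y,\Delta]}\widetilde{\omega}=f\widetilde{\theta}$ directly (this is exactly how the paper phrases it). For the second assertion, your direct expansion of $0=L_{V}\widetilde{\omega}=d(P\,\Theta)$ and isolation of the coefficient $-\tfrac{1}{2}P\,R_{ij}^{a}$ of $dx^{i}\wedge dx^{j}\wedge\Theta_{a}$ is in fact cleaner than the paper's reductio, which assumes $g\neq 0$ on an open set, rewrites $\widetilde{\omega}=dh\wedge\widetilde{\theta}$ with $h=-\log g$ as in (\ref{22}), and contradicts the coordinate expression (\ref{9}) of $i_{C_{i}}\widetilde{\omega}$; the two arguments hinge on the same curvature term. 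Finally, note that both your last step and the paper's use ``$R\neq0$'' pointwise; this is harmless because on a pre-multicontact manifold the characteristic distribution $D$ has constant rank, so $R\not\equiv0$ forces $R_{x}\neq0$ at every point $x$, and hence $P=0$ everywhere.
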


\begin{proof}
I need some preliminary remarks. The Euler vector field $\Delta$ on
$\wedge^{n}C^{0}$ restricts to $\widetilde{M}$. Denote again by $\Delta$
the restriction. It is the fundamental vector field corresponding to the
canonical generator $1$ of the Lie algebra $\mathbb{R}$ of the structure group
$\mathbb{R}^{\times}$ of the principal bundle $\pi:\widetilde{M}%
\to M$. Locally $\Delta=p\partial/\partial p$. Notice that
\[
i_{\Delta}\widetilde{\theta}=0,\quad\text{and\quad}L_{\Delta}\widetilde
{\theta}=\widetilde{\theta},
\]
hence,
\[
i_{\Delta}\widetilde{\omega}=\widetilde{\theta},\quad\text{and\quad}L_{\Delta
}\widetilde{\omega}=\widetilde{\omega}.
\]
Now, let $Y$ be as in the statement and prove that $X$ is multicontact. It is enough to show that $Y$ preserves
the distribution $\widetilde{C}:=\pi_{\ast}^{-1}(C)$ on $\widetilde{M}$.
Clearly, $\widetilde{C}=\ker\widetilde{\theta}$. Now, since $Y$ is
projectable, then $[Y,\Delta]=f\Delta$ for some function $f$ on $\widetilde
{M}$. From $L_{Y}\widetilde{\omega}=0$, it follows
\[
0=i_{\Delta}L_{Y}\widetilde{\omega}=i_{[\Delta,Y]}\widetilde{\omega}%
+L_{Y}i_{\Delta}\widetilde{\omega}=-f\widetilde{\theta}+L_{Y}\widetilde
{\theta}.
\]
This shows that $L_{Y}\widetilde{\theta}=f\widetilde{\theta}$. Finally, let
$Z\in\Gamma(\widetilde{C})$. Compute
\[
i_{[Y,Z]}\widetilde{\theta}=[L_{Y},i_{Z}]\widetilde{\theta}=i_{Z}%
L_{Y}\widetilde{\theta}=fi_{Z}\widetilde{\theta}=0.
\]
This shows that $[Y,Z]\in\Gamma(\widetilde{C})$ for all $Z \in \Gamma (C)$, hence $X$ is
multicontact. In particular, $Y-\widetilde{X}$ is a vertical locally
Hamiltonian vector field. But, if $R\neq0$, then any vertical locally
Hamiltonian vector field $V$ is trivial. Indeed, $V=g\Delta$ for some function
$g$, and
\[
0=L_{g\Delta}\widetilde{\omega}=g\widetilde{\omega}+dg\wedge\widetilde{\theta
}.
\]
Assume by absurd that $g\neq0$ somewhere, hence in an open subset $U$ of
$\widetilde{M}$. Without loss of generality, let $g>0$ in $U$. Then
\begin{equation}
\widetilde{\omega}=dh\wedge\widetilde{\theta},\quad h=-\log g. \label{22}%
\end{equation}
Compute
\begin{equation}
i_{C_{i}}\widetilde{\omega}=p\,\partial_{a}C_{i}^{a}\Theta-pR_{ij}^{a}%
dz^{j}\wedge\Theta_{a}. \label{9}%
\end{equation}
But
\[
i_{C_{i}}(dh\wedge\widetilde{\theta})=C_{i}(h)\widetilde{\theta}%
=p\,C_{i}(h)\Theta,
\]
which is inconsistent with (\ref{9}) when $R\neq0$.
\end{proof}

\begin{remark}
The second part of Proposition \ref{10} cannot be extended to the case $R=0$.
Indeed, when $R=0$, in view of Frobenius Theorem, one can choose $C_{i}^{a}%
=0$, and $\widetilde{\omega}$ is locally given by $\widetilde{\omega}=dp\wedge
dz^{1}\wedge\cdots\wedge dz^{n}$. The vector field $\partial/\partial p$ is
locally Hamiltonian and vertical, in particular projectable. However, it
projects onto the trivial vector field.
\end{remark}

\begin{remark}
[coordinate formulas]Let $X$ be a multicontact vector field on $(M,C)$ locally
given by $X=X^{a}\partial_{a}+X^{i}C_{i}$. A direct computation shows that%
\begin{equation}
\widetilde{X}=X^{a}\partial_{a}+X^{i}C_{i}-(\partial_{a}X^{a}+\partial
_{a}C_{i}^{a}\,X^{i})\Delta. \label{26}%
\end{equation}

\end{remark}

\section{Homogeneous de Rham complex\label{SecHom}}

In the algebra $\Omega(\widetilde{M})$ of differential forms on $\widetilde
{M}$ consider the subspace $\Omega_{\circ}(\widetilde{M})$ consisting of
\emph{homogeneous differential forms}, i.e.~those differential forms $\mu$
such that $L_{\Delta}\mu=\mu$, $\Delta$ being the Euler vector field of
$\wedge^{n}C^{0}$ restricted to $\widetilde{M}$ (see the proof of
Proposition \ref{10}).

\begin{remark}
Differential forms $\widetilde{\theta}$ and $\widetilde{\omega}$ are homogeneous.
More generally, a form $\mu$ on $\widetilde{M}$ is homogeneous iff it is locally of the
form
\[
\mu=p\,\pi^{\ast}(\mu^{\prime})+dp\wedge\pi^{\ast}(\mu^{\prime\prime
}),\quad\mu^{\prime},\mu^{\prime\prime}\in\Omega(M).
\]
In particular, \end{remark}

In general, $\Omega_{\circ}(\widetilde{M})$ is not a subalgebra of
$\Omega(\widetilde{M})$. Nonetheless, since Lie derivatives commute with the
exterior differential, it is a subcomplex of the de Rham complex
$(\Omega(\widetilde{M}),d)$.

\begin{proposition}
\label{23}The \emph{homogenous de Rham complex} $(\Omega_{\circ}(\widetilde
{M}),d)$ is acyclic.
\end{proposition}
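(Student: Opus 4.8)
The plan is to exhibit an explicit contracting homotopy for $(\Omega_{\circ}(\widetilde{M}),d)$ built from the Euler vector field $\Delta$. The key tool is Cartan's magic formula $L_{\Delta}=d\,i_{\Delta}+i_{\Delta}\,d$. By definition a form $\mu$ lies in $\Omega_{\circ}(\widetilde{M})$ precisely when $L_{\Delta}\mu=\mu$, so on the homogeneous subcomplex Cartan's formula reads
\[
d\,i_{\Delta}+i_{\Delta}\,d=\operatorname{id}.
\]
Thus, if I can show that $h:=i_{\Delta}$ restricts to an operator $\Omega_{\circ}^{k}(\widetilde{M})\to\Omega_{\circ}^{k-1}(\widetilde{M})$, then $h$ is a contracting homotopy and acyclicity follows immediately: any cocycle $\mu$ satisfies $\mu=d(h\mu)+h(d\mu)=d(h\mu)$, hence is a coboundary within $\Omega_{\circ}(\widetilde{M})$.

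The only point that requires a (short) argument is that $i_{\Delta}$ preserves homogeneity. This follows from the commutation relation $L_{\Delta}i_{\Delta}-i_{\Delta}L_{\Delta}=i_{[\Delta,\Delta]}=0$: if $L_{\Delta}\mu=\mu$, then $L_{\Delta}(i_{\Delta}\mu)=i_{\Delta}L_{\Delta}\mu=i_{\Delta}\mu$, so $i_{\Delta}\mu$ is again homogeneous. That $d$ preserves $\Omega_{\circ}(\widetilde{M})$ is already noted above, since $L_{\Delta}$ commutes with $d$. With both $d$ and $i_{\Delta}$ mapping the homogeneous subcomplex to itself, the displayed identity is a genuine chain homotopy between $\operatorname{id}$ and $0$ on $(\Omega_{\circ}(\widetilde{M}),d)$.

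I expect no serious obstacle here: the entire argument is the observation that $\Delta$ acts as a grading operator whose eigenvalue on $\Omega_{\circ}(\widetilde{M})$ equals $1$, so that contraction with $\Delta$ inverts the differential up to homotopy. As a sanity check on the bottom of the complex, a homogeneous function $f$ with $df=0$ satisfies $f=L_{\Delta}f=i_{\Delta}\,df=0$, so $H^{0}=0$ as well, consistent with the homotopy (where $i_{\Delta}f=0$ automatically).
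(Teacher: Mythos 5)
Your proof is correct and is essentially identical to the paper's: both exhibit $i_{\Delta}$ as a contracting homotopy via Cartan's formula $[i_{\Delta},d]=L_{\Delta}=\mathrm{id}$ on homogeneous forms. You merely spell out the (true and easy) detail that $i_{\Delta}$ preserves $\Omega_{\circ}(\widetilde{M})$, via $[L_{\Delta},i_{\Delta}]=i_{[\Delta,\Delta]}=0$, which the paper leaves implicit.
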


\begin{proof}
The insertion $i_{\Delta}$ is a contracting homotopy for $(\Omega_{\circ
}(\widetilde{M}),d)$, i.e.~$[i_{\Delta},d]=L_{\Delta}=\mathrm{id}$ on
$\Omega_{\circ}(\widetilde{M})$.
\end{proof}

It immediately follows from the proof of the above proposition that
\begin{equation}
\Omega_{\circ}^{n-1}(\widetilde{M})=B_{\circ}\oplus K \label{4}%
\end{equation}
where
\begin{align*}
B_{\circ}  &  :=\operatorname{im}(d:\Omega_{\circ}^{n-2}(\widetilde
{M})\longrightarrow\Omega_{\circ}^{n-1}(\widetilde{M})),\\
K  &  :=\ker(i_{\Delta}:\Omega_{\circ}^{n-1}(\widetilde{M})\longrightarrow
\Omega_{\circ}^{n-2}(\widetilde{M})).
\end{align*}
Decomposition (\ref{4}) is implemented as follows. For $\mu\in\Omega
_{\circ}^{n-1}(\widetilde{M})$,
\[
\mu=L_{\Delta}\mu=di_{\Delta}\mu+i_{\Delta}d\mu
\]
with $di_{\Delta}\mu\in B_{\circ}$, and $i_{\Delta}d\mu\in K$.

Now, recall that an $(n-1)$-form $\mu$ on $\widetilde{M}$ is
\emph{Hamiltonian} iff there is a vector field $X$, an associated
\emph{Hamiltonian vector field}, not necessarily unique (unless $\widetilde
{\omega}$ is multisymplectic), such that $i_{X}\omega=-d\mu$. Denote by
$\Omega_{\mathrm{Ham}}^{n-1}(\widetilde{M},\widetilde{\omega})\subset
\Omega^{n-1}(\widetilde{M})$ the vector subspace of Hamiltonian forms.
Consider the distinguished subspace $\Omega_{\mathrm{Ham}}^{n-1}(M,C)$ of
$\Omega_{\mathrm{Ham}}^{n-1}(\widetilde{M},\widetilde{\omega})$ consisting of
homogenous Hamiltonian forms with an associated Hamiltonian vector field which
is projectable on $M$:
\[
\Omega_{\mathrm{Ham}}^{n-1}(M,C):=\{\mu\in\Omega_{\circ}^{n-1}%
(\widetilde{M}):\exists Y\in\mathfrak{X}(\widetilde{M})\text{ such that
}Y\text{ is projectable and }i_{Y}\widetilde{\omega}=-d\mu\}.
\]
As I will show in the next section elements in $\Omega_{\mathrm{Ham}}%
^{n-1}(M,C)$ should be understood as the contact analogues of Hamiltonian
forms on (pre-)multisymplectic manifolds. For now, notice that if $C$ is
$1$-contact, then $\widetilde{\omega}$ is $1$-plectic and every homogeneous
function on $\widetilde{M}$ is in $\Omega_{\mathrm{Ham}}^{n-1}(M,C)$.
Moreover, since $\widetilde{M}$ is dense in $C^{0}$, homogeneous functions
on $\widetilde{M}$ identify with fiberwise linear functions on $C^{0}$,
i.e.~sections of $N$. Thus, if $C$ is $1$-contact, $\Omega_{\mathrm{Ham}%
}^{n-1}(M,C)$ is in one-to-one correspondence with $\Gamma(N)$, sections of
the Jacobi bundle. In the general case, I will be interested in the following
truncated, homogenous de Rham complex
\begin{equation}
0\longrightarrow C_{\circ}^{\infty}(\widetilde{M})\overset{d}{\longrightarrow
}\Omega_{\circ}^{1}(\widetilde{M})\overset{d}{\longrightarrow}\cdots
\longrightarrow\Omega_{\circ}^{n-2}(\widetilde{M})\overset{d}{\longrightarrow
}\Omega_{\mathrm{Ham}}^{n-1}(M,C)\longrightarrow0, \label{5}%
\end{equation}
which is obviously well defined. In view of Proposition (\ref{23}) the
cohomology of (\ref{5}) is trivial everywhere except in the last term where it
is $\Omega_{\mathrm{Ham}}^{n-1}(M,C)/B_{\circ}$. The next proposition
describes the quotient $\Omega_{\mathrm{Ham}}^{n-1}(M,C)/B_{\circ}$.

\begin{proposition}
\label{13}There is a canonical isomorphism
\[
\Gamma_{\mathrm{Ham}}(N)\simeq\Omega_{\mathrm{Ham}}^{n-1}(M,C)/B_{\circ}.
\]

\end{proposition}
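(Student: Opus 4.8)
The plan is to exhibit the isomorphism explicitly, realising a Hamiltonian section $\nu = \theta(X)$ (with $X \in \mathfrak{X}_C$) by the $(n-1)$-form $i_{\widetilde{X}}\widetilde{\theta}$ on $\widetilde{M}$, where $\widetilde{X}$ is the canonical lift of $X$. First I would check that the assignment $\Phi:\mathfrak{X}_C \to \Omega^{n-1}(\widetilde{M})$, $X \mapsto i_{\widetilde{X}}\widetilde{\theta}$, actually lands in $\Omega_{\mathrm{Ham}}^{n-1}(M,C)$. Since $\widetilde{X}$ preserves $\widetilde{\theta}$, Cartan's formula gives $i_{\widetilde{X}}\widetilde{\omega} = i_{\widetilde{X}}d\widetilde{\theta} = L_{\widetilde{X}}\widetilde{\theta} - d\,i_{\widetilde{X}}\widetilde{\theta} = -d(i_{\widetilde{X}}\widetilde{\theta})$, so $i_{\widetilde{X}}\widetilde{\theta}$ is Hamiltonian, with associated Hamiltonian vector field $\widetilde{X}$, which is projectable (onto $X$). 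Homogeneity follows from $[\Delta,\widetilde{X}]=0$: the lift is linear on the fibres of $\wedge^{n}C^{0}$, hence commutes with the Euler field $\Delta$ (equivalently, combine formula (\ref{26}) with $\Delta = p\,\partial/\partial p$), whence $L_{\Delta}i_{\widetilde{X}}\widetilde{\theta} = i_{\widetilde{X}}L_{\Delta}\widetilde{\theta} = i_{\widetilde{X}}\widetilde{\theta}$. As the lift is $\mathbb{R}$-linear in $X$, $\Phi$ is linear, and composing with the quotient by $B_{\circ}$ yields a linear map $\bar\Phi$.

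Next I would compute the kernel modulo $B_\circ$, that is, show $\{X \in \mathfrak{X}_C : i_{\widetilde{X}}\widetilde{\theta}\in B_{\circ}\} = \Gamma(D)$. If $i_{\widetilde{X}}\widetilde{\theta}\in B_{\circ}$ it is in particular closed, so $i_{\widetilde{X}}\widetilde{\omega} = -d(i_{\widetilde{X}}\widetilde{\theta}) = 0$, i.e.\ $\widetilde{X}\in\ker\widetilde{\omega} = \widetilde{D}$ by Theorem \ref{8}; since $\widetilde{D}$ projects isomorphically onto $D$ and $\widetilde{X}$ projects onto $X$, this forces $X\in\Gamma(D)$. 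Conversely, for $X\in\Gamma(D)$ the lift $\widetilde{X}$ lies in $\widetilde{D}=\ker\widetilde{\omega}$, so again $i_{\widetilde{X}}\widetilde{\omega}=0$ and $i_{\widetilde{X}}\widetilde{\theta}$ is closed; being homogeneous, the homotopy decomposition $i_{\widetilde{X}}\widetilde{\theta} = d\,i_{\Delta}(i_{\widetilde{X}}\widetilde{\theta}) + i_{\Delta}d(i_{\widetilde{X}}\widetilde{\theta})$ of Proposition \ref{23} collapses to $d\,i_{\Delta}(i_{\widetilde{X}}\widetilde{\theta})\in B_{\circ}$. Hence $\bar\Phi$ descends to an \emph{injective} map $\Gamma_{\mathrm{Ham}}(N) = \mathfrak{X}_C/\Gamma(D)\hookrightarrow \Omega_{\mathrm{Ham}}^{n-1}(M,C)/B_{\circ}$.

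Surjectivity is the step I expect to carry the real content. Given $\mu\in\Omega_{\mathrm{Ham}}^{n-1}(M,C)$, I would pick a projectable associated Hamiltonian vector field $Y$, with $i_{Y}\widetilde{\omega} = -d\mu$, and let $X$ be its projection; by Proposition \ref{10}, $X\in\mathfrak{X}_C$. Comparing $\mu$ with the canonical representative, set $\lambda := \mu - i_{\widetilde{X}}\widetilde{\theta}$. The field $V := Y - \widetilde{X}$ projects to $0$, hence is $\pi$-vertical, so $V = g\Delta$ for some $g\in C^{\infty}(\widetilde{M})$, giving $i_{V}\widetilde{\omega} = g\,i_{\Delta}\widetilde{\omega} = g\widetilde{\theta}$, while on the other hand $i_{V}\widetilde{\omega} = i_{Y}\widetilde{\omega} - i_{\widetilde{X}}\widetilde{\omega} = -d\lambda$. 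Since $i_{\Delta}\widetilde{\theta}=0$, the homotopy decomposition yields $\lambda = d\,i_{\Delta}\lambda + i_{\Delta}d\lambda = d\,i_{\Delta}\lambda - g\,i_{\Delta}\widetilde{\theta} = d\,i_{\Delta}\lambda\in B_{\circ}$, so $[\mu] = [i_{\widetilde{X}}\widetilde{\theta}] = \bar\Phi(\theta(X))$, proving $\bar\Phi$ onto. The genuinely delicate point is exactly this last reduction: the obstruction $V$ to $Y$ coinciding with the lift $\widetilde{X}$ is \emph{purely vertical}, so $d\lambda$ is a multiple of $\widetilde{\theta}$ and is therefore annihilated by $i_{\Delta}$ — which is what makes $\lambda$ exact in the homogeneous complex even when $Y\neq\widetilde{X}$ (cf.\ the $R=0$ phenomenon in the remark after Proposition \ref{10}). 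Finally, built from the canonical data $\widetilde{\theta}$, $\Delta$ and the canonical lift, the resulting map is canonical, and in the $1$-contact case it recovers the classical identification $\Gamma(N)\simeq C_{\circ}^{\infty}(\widetilde{M})$.
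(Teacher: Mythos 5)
Your proposal is correct, and it reaches the isomorphism by a genuinely different (if closely related) route. The paper first uses the contracting homotopy to split $\Omega_{\mathrm{Ham}}^{n-1}(M,C)=B_{\circ}\oplus K_{\mathrm{Ham}}$, with $K_{\mathrm{Ham}}=\ker i_{\Delta}\cap\Omega_{\mathrm{Ham}}^{n-1}(M,C)$ (decomposition (\ref{4})), and then identifies the distinguished complement $K_{\mathrm{Ham}}$ with $\Gamma_{\mathrm{Ham}}(N)$: the map $\nu\mapsto\widetilde{\nu}$ is built pointwise from the linear maps $\varphi_{a}$ with $a=\varphi_{a}\circ\theta$, injectivity is read off the coordinate expression $\widetilde{\nu}=p\nu^{a}\Theta_{a}$, and surjectivity pins down any $\mu\in K_{\mathrm{Ham}}$ exactly as $\mu=i_{Y}\widetilde{\theta}$ by applying $i_{\Delta}$ to $i_{Y}\widetilde{\omega}+d\mu=0$. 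You never choose a complement: you define the same canonical map $\theta(X)\mapsto[i_{\widetilde{X}}\widetilde{\theta}]$ directly on cosets, compute its kernel to be $\Gamma(D)$ via Theorem \ref{8} (using $\ker\widetilde{\omega}=\widetilde{D}$ and the pointwise isomorphism $\widetilde{D}\to D$), and prove surjectivity by showing the discrepancy $\lambda=\mu-i_{\widetilde{X}}\widetilde{\theta}$ is exact in the homogeneous complex, since $d\lambda=-g\widetilde{\theta}$ is killed by $i_{\Delta}$. What your route buys: the paper's surjectivity step cites Proposition \ref{10} also for the conclusion $Y=\widetilde{X}$, which holds only for $R\neq0$ (and is in fact not needed there, the local formula already yielding $\mu=\widetilde{\theta(X)}$); your explicit handling of the vertical discrepancy $V=g\Delta$ makes it transparent that the proposition requires no hypothesis on $R$, and it avoids coordinates entirely except where you invoke them as an alternative. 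What the paper's route buys is the sharper statement that every class has a unique representative in $K$, namely $i_{\widetilde{X}}\widetilde{\theta}$ on the nose, which is reused later. One small point to tighten: in the converse kernel inclusion you assert that $X\in\Gamma(D)$ implies $\widetilde{X}\in\Gamma(\widetilde{D})$; this is true but deserves a line of justification --- it is exactly what the proof of the lemma preceding Theorem \ref{8} establishes (the flow of $\widetilde{X}$ preserves images of $D$-flat sections, whence $\widetilde{X}_{\alpha}=A_{\ast}(X_{x})$) --- or it can be bypassed altogether, since Formula (\ref{26}) gives $i_{\widetilde{X}}\widetilde{\theta}=pX^{a}\Theta_{a}=0$ outright for $X\in\Gamma(D)$, as such an $X$ has no $\partial_{a}$-component. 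Neither point is a gap.
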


\begin{proof}
It follows from (\ref{4}) that
\[
\Omega_{\mathrm{Ham}}^{n-1}(M,C)=B_{\circ}\oplus K_{\mathrm{Ham}}%
\]
where $K_{\mathrm{Ham}}:=K\cap\Omega_{\mathrm{Ham}}^{n-1}(M,C)$. It is then
enough to show that $K_{\mathrm{Ham}}\simeq\Gamma_{\mathrm{Ham}}(N)$. The
isomorphism can be described as follows. First of all, notice that, for all
$a\in\widetilde{M}$, since $\ker a=C_{x}$, $x=\pi(a)$, there exists a unique
linear map $\varphi_{a}:N_{x}\to\wedge^{n-1}T_{x}^{\ast}M$ such
that $a=\varphi_{a}\circ\theta$. Now, let $\nu\in\Gamma(N)$. Define an
$(n-1)$-form $\widetilde{\nu}$ on $\widetilde{M}$ by putting
\[
\widetilde{\nu}_{a}:=\pi^{\ast}(\varphi_{a}(\nu)),\quad a\in\widetilde{M}.
\]
If $\nu$ is locally given by $\nu=\nu^{a}\theta(\partial_{a})$, then
$\widetilde{\nu}$ is locally given by $\widetilde{\nu}=p\nu^{a}\Theta_{a}$.
This shows that $\widetilde{\nu}\in K$. Moreover $\widetilde{\nu}=0$ iff
$\nu=0$. Finally, $\widetilde{\nu}\in K_{\mathrm{Ham}}$ if $\nu\in
\Gamma_{\mathrm{Ham}}(N)$. Indeed, let $\nu$ be a Hamiltonian section. Then
$\nu=\theta(X)$ for some multicontact vector field $X$. Lift it to a
multisymplectic vector field $\widetilde{X}$ on $\widetilde{M}$. It is easy to
see, for instance in local coordinates, that $i_{\widetilde{X}}\widetilde
{\theta}=\widetilde{\nu}$. It follows that
\[
i_{\widetilde{X}}\widetilde{\omega}=i_{\widetilde{X}}d\widetilde{\theta
}=L_{\widetilde{X}}\widetilde{\theta}-di_{\widetilde{X}}\widetilde{\theta
}=-d\widetilde{\nu}.
\]
Define the injective map $\Gamma_{\mathrm{Ham}}(N)\to
K_{\mathrm{Ham}}$ as $\nu\mapsto\widetilde{\nu}$. Conversely, let
$\mu\in K_{\mathrm{Ham}}$ and $Y$ be an associated projectable Hamiltonian
vector field, i.e.~$i_{Y}\widetilde{\omega}=-d\mu$. Then,
\[
0=i_{\Delta}(i_{Y}\widetilde{\omega}+d\mu)=-i_{Y}i_{\Delta}\widetilde
{\omega}+L_{\Delta}\mu=-i_{Y}\widetilde{\theta}+\mu,
\]
i.e.~$\mu=i_{Y}\widetilde{\theta}$. If $Y$ is locally given by
$Y=Y^{a}\partial_{a}+Y^{i}C_{i}+Y_{0}\Delta$, then $\mu=i_{Y}%
\widetilde{\theta}=pY^{a}\Theta_{a}$. Now, let $X$ be the projection of $Y$.
In view of Proposition \ref{10}, $X$ is multicontact and $Y=\widetilde{X}$.
Moreover, $\theta(X)=Y^{a}\theta(\partial_{a})$. Hence $\mu=\widetilde
{\theta(X)}$. This shows that the map $\Gamma_{\mathrm{Ham}}(N)\to
K_{\mathrm{Ham}}$ defined above is surjective.
\end{proof}

\begin{remark}
If $C$ is multicontact, then $\widetilde{\omega}$ is multisymplectic and
\[
\Omega_{\mathrm{Ham}}^{n-1}(M,C)=\Omega_{\mathrm{Ham}}^{n-1}(\widetilde
{M},\widetilde{\omega})\cap\Omega_{\circ}^{n-1}(\widetilde{M}).
\]
Indeed, $\Omega_{\mathrm{Ham}}^{n-1}(M,C)\subset\Omega_{\mathrm{Ham}}%
^{n-1}(\widetilde{M},\widetilde \omega)\cap\Omega_{\circ}^{n-1}(\widetilde{M})$.
On the other hand, let $\mu\in\Omega_{\mathrm{Ham}}^{n-1}(\widetilde{M},\widetilde{C})$
be homogeneous, and let $Y$ be the (unique) Hamiltonian vector field
associated to it. Then%
\[
0=L_{\Delta}(i_{Y}\widetilde{\omega}+d\mu)=i_{[\Delta,Y]}\widetilde{\omega
}.
\]
Hence $[\Delta,Y]=0$ and $Y$ is projectable. This shows that $\mu\in
\Omega_{\mathrm{Ham}}^{n-1}(M,C)$. Thus, $\Omega_{\mathrm{Ham}}^{n-1}%
(\widetilde{M},\widetilde{\omega})\cap\Omega_{\circ}^{n-1}(\widetilde
{M})\subset\Omega_{\mathrm{Ham}}^{n-1}(M,C)$.
\end{remark}

\section{$L_{\infty}$-algebras from multicontact geometry}

In this section I define a {multicontact} analogue of the Jacobi bundle of a contact
manifold. Equivalently, I define the contact analogue of the $L_{\infty}%
$-algebra of a (pre-)multisymplectic manifold \cite{r12,z12}. First of all
recall the definition of an $L_{\infty}$-algebra. I use the ``
homological convention'' . In what follows I denote by $|v|$ the degree of a homogeneous element $v$ in a graded vector space.

\begin{definition}
[\cite{ls93,lm95}]An $L_{\infty}$\emph{-algebra} is a pair $(\mathfrak{g}%
,\{\lambda_{\ell},\ \ell\in\mathbb{N}\})$, where $\mathfrak{g}=\bigoplus_{i}$
$\mathfrak{g}_{i}$ is a graded vector space, and the $\lambda_{\ell}$'s are
$\ell$-ary, graded, multilinear, degree $\ell-2$ operations
\[
\lambda_{\ell}:{}\wedge^{\ell}\mathfrak{g}\longrightarrow\mathfrak{g},\quad
k\in\mathbb{N},
\]
such that
\begin{equation}
\sum_{i+j=\ell}(-)^{ij}\sum_{\sigma}\chi(\sigma,\boldsymbol{v})\,\lambda
_{j+1}(\lambda_{i}(v_{\sigma(1)},\ldots,v_{\sigma(i)}),v_{\sigma(i+1)}%
,\ldots,v_{\sigma(i+j)})=0, \label{24}%
\end{equation}
for all $v_{1},\ldots,v_{\ell}\in\mathfrak{g}$, $\ell\in\mathbb{N}$ (in
particular, $(\mathfrak{g},\lambda_{1})$ is a chain complex and
$H(\mathfrak{g},\lambda_{1})$ is a graded Lie algebra).
\end{definition}

In Formula (\ref{24}), the sum is over all unshuffles $S_{i,j}$, i.e.,
permutations $\sigma$ of $\{1,\ldots,\ell\}$ such that $\sigma(1)<\cdots
<\sigma(i)$, and $\sigma(i+1)<\cdots<\sigma(\ell)$, and $\chi(\sigma
,\boldsymbol{v})$ is the sign implicitly defined by
\[
v_{\sigma(1)}\wedge\cdots\wedge v_{\sigma(\ell)}=\chi(\mu,\boldsymbol{v}%
)v_{1}\wedge\cdots\wedge v_{\ell},
\]
where the wedge $\wedge$ indicates the exterior (graded skew-symmetric)
product of elements in $\mathfrak{g}$, which satisfies, by definition,
$v\wedge w=-(-)^{|v||w|}w\wedge v$, for all homogeneous elements $v,w \in \mathfrak g$.

If $\mathfrak{g}$ is concentrated in degree $0$, then an $L_{\infty}$-algebra
structure on $\mathfrak{g}$ is simply a Lie algebra structure. Similarly, if
$\lambda_{\ell}=0$ for all $\ell>2$, then $(\mathfrak{g},\{\lambda_{\ell
},\ \ell\in\mathbb{N}\})$ is a differential graded Lie algebra. More
generally, $L_{\infty}$-algebras are \emph{Lie algebras up to homotopy}.
Indeed, the binary bracket $\lambda_{2}$ of an $L_{\infty}$-algebra satisfies
the (graded) Jacobi identity only up to an homotopy encoded by $\lambda_{3}$.
Similarly, the higher brackets satisfy higher versions of the Jacobi identity
(up to homotopies).

In \cite{r12} and \cite{z12} the authors show that there is an $L_{\infty}%
$-algebra canonically associated to a (pre-)multisymplectic manifold. Such
$L_{\infty}$-algebra plays a role analogous to that of the Poisson algebra of
functions on a symplectic manifold \cite{frz13,frs13b,frs13}. In the case of
the (pre-)multisymplectization $(\widetilde{M},\widetilde{\omega})$ of a
pre-$n$-contact manifold, Rogers and Zambon results read as follows.

\begin{theorem}
There is an $L_{\infty}$-algebra $\mathfrak{g}_{\bullet}(\widetilde
{M},\widetilde{\omega})=\bigoplus_{i=0}^{n-1}\mathfrak{g}_{i}(\widetilde
{M},\widetilde{\omega})$, concentrated in degrees $0,\ldots,n-1$, where
\[
\mathfrak{g}_{i}(\widetilde{M},\widetilde{\omega}):=\left\{
\begin{array}
[c]{ll}%
\Omega_{\mathrm{Ham}}^{n-1}(\widetilde{M},\widetilde{\omega}) & \text{if
}i=0\\
\Omega^{n-i-1}(\widetilde{M}) & \text{if }0<i\leq n-1
\end{array}
\right.  .
\]
The operations in $\mathfrak{g}_{\bullet}(\widetilde{M},\widetilde{\omega})$
are defined as follows $(\mathfrak{g}_{\bullet}(\widetilde{M},\widetilde
{\omega}),\lambda_{1})$ is the truncated de Rham complex
\[
0\longleftarrow\Omega_{\mathrm{Ham}}^{n-1}(\widetilde{M},\widetilde{\omega
})\overset{d}{\longleftarrow}\Omega^{n-2}(\widetilde{M})\longleftarrow
\cdots\overset{d}{\longleftarrow}\Omega^{1}(\widetilde{M})\overset
{d}{\longleftarrow}C^{\infty}(\widetilde{M})\longleftarrow0,
\]
and, for $\ell>0$,
\[
\lambda_{\ell}(\mu_{1},\ldots,\mu_{\ell})=\left\{
\begin{array}
[c]{cc}%
-(-)^{\ell}i_{X_{\mu_{1}}}\cdots i_{X_{\mu_{\ell}}}\widetilde{\omega} &
\text{if }| \mu_{1} |+\cdots+| \mu_{\ell} |=0\\
0 & \text{if }| \mu_{1} |+\cdots+| \mu_{\ell} |>0
\end{array}
\right.  ,
\]
where $X_{\mu}$ is an Hamiltonian vector field associated to the
Hamiltonian form $\mu$.
\end{theorem}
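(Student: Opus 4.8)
The plan is to recognize the statement as the specialization, to the pre-multisymplectization $(\widetilde{M},\widetilde{\omega})$, of the general construction of Rogers \cite{r12} and Zambon \cite{z12}; since Theorem \ref{8} guarantees that $\widetilde{\omega}$ is a pre-$n$-plectic form with $\ker\widetilde{\omega}=\widetilde{D}$, the only thing to do is to check that the prescribed operations satisfy the $L_{\infty}$ relations (\ref{24}). I would organize the verification around three ingredients: (i) well-definedness of the higher brackets in the presence of a nontrivial kernel; (ii) the identification of $\lambda_{1}$ with the de Rham differential, giving a complex that lands in Hamiltonian forms; and (iii) the homotopy Jacobi identities proper.

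For (i), the subtlety peculiar to the pre-multisymplectic setting is that a Hamiltonian form $\mu$ admits in general a whole affine space of associated Hamiltonian vector fields $X_\mu$, any two differing by a section of $\ker\widetilde{\omega}=\widetilde{D}$. I would check that $\lambda_{\ell}(\mu_{1},\ldots,\mu_{\ell})=-(-)^{\ell}i_{X_{\mu_{1}}}\cdots i_{X_{\mu_{\ell}}}\widetilde{\omega}$ is independent of these choices: replacing $X_{\mu_{k}}$ by $X_{\mu_{k}}+V$ with $i_{V}\widetilde{\omega}=0$ alters the expression by a term in which $i_{V}$ can be anticommuted past the remaining insertions until it acts directly on $\widetilde{\omega}$, hence vanishes. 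For (ii), note that $d(\Omega^{n-2}(\widetilde{M}))$ consists of exact, hence (with associated Hamiltonian vector field $0$) Hamiltonian, $(n-1)$-forms, so $\lambda_{1}=d$ really maps into $\Omega^{n-1}_{\mathrm{Ham}}(\widetilde{M},\widetilde{\omega})$ and $(\mathfrak{g}_{\bullet},\lambda_{1})$ is a genuine complex by $d^{2}=0$.

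The computational heart is (iii). The key algebraic inputs are Cartan's formula $L_{X}=[i_{X},d]$, the closedness $d\widetilde{\omega}=0$, the locally Hamiltonian condition $L_{X_{\mu}}\widetilde{\omega}=0$, and the defining relation $i_{X_{\mu}}\widetilde{\omega}=-d\mu$. From these one obtains the interchange identity $i_{[X,Y]}\widetilde{\omega}=d\,i_{X}i_{Y}\widetilde{\omega}$ for locally Hamiltonian $X,Y$, which simultaneously shows that $\lambda_{2}$ takes values in Hamiltonian forms, with associated Hamiltonian vector field $[X_{\mu_{1}},X_{\mu_{2}}]$ (up to sign), and governs the failure of the strict Jacobi identity. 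I would then split the verification of (\ref{24}) by the degrees of the arguments. If some $\mu_{k}$ has positive degree, every bracket $\lambda_{\geq 2}$ containing it vanishes, so (\ref{24}) collapses to $\lambda_{1}^{2}=0$ together with compatibility of $d$ with $\lambda_{2}$, which are immediate. The substantial case is when all $\mu_{1},\ldots,\mu_{\ell}$ lie in degree $0$: there the only surviving terms are $d\lambda_{\ell}(\mu_{1},\ldots,\mu_{\ell})$ and the sum over $(2,\ell-2)$-unshuffles of $\lambda_{\ell-1}(\lambda_{2}(\mu_{\sigma(1)},\mu_{\sigma(2)}),\mu_{\sigma(3)},\ldots)$, and (\ref{24}) reduces to the single Cartan-calculus identity expressing $d(i_{X_{\mu_{1}}}\cdots i_{X_{\mu_{\ell}}}\widetilde{\omega})$ as the signed sum of the $i_{[X_{\mu_{a}},X_{\mu_{b}}]}$-contractions, all remaining terms dropping out because $d\widetilde{\omega}=0$ and $L_{X_{\mu_{i}}}\widetilde{\omega}=0$.

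The step I expect to be the main obstacle is precisely this sign bookkeeping: matching the Koszul signs $(-)^{ij}\chi(\sigma,\boldsymbol{v})$ prescribed by (\ref{24}) against the signs generated by repeatedly anticommuting insertion operators and invoking Cartan's formula, and confirming that the surviving contributions reorganize into the claimed identity. This is exactly the Rogers--Zambon computation, which transfers verbatim to $(\widetilde{M},\widetilde{\omega})$ once Theorem \ref{8} supplies the pre-$n$-plectic structure; accordingly I would either reproduce their induction on $\ell$ or simply invoke \cite{r12,z12}, the only genuinely new point being the well-definedness check (i) forced by $\widetilde{D}=\ker\widetilde{\omega}\neq 0$.
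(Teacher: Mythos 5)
The paper offers no proof of this theorem at all: it is stated as the specialization to $(\widetilde{M},\widetilde{\omega})$ of the Rogers--Zambon construction \cite{r12,z12}, with Theorem \ref{8} supplying the pre-$n$-plectic structure, which is exactly your proposed route. Your supplementary sketch is also a correct outline of their verification --- the choice-independence of the brackets modulo $\ker\widetilde{\omega}=\widetilde{D}$, the fact that $\lambda_{1}=d$ lands in Hamiltonian forms (exact forms admit the zero Hamiltonian vector field), and the reduction of (\ref{24}), when all arguments have degree $0$, to the two surviving terms $d\lambda_{\ell}$ and the $(2,\ell-2)$-unshuffle sum handled by the identity $i_{[X,Y]}\widetilde{\omega}=d\,i_{X}i_{Y}\widetilde{\omega}$.
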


Elements of the $L_{\infty}$-algebra $\mathfrak{g}_{\bullet}(\widetilde
{M},\widetilde{\omega})$ should be interpreted as observables of
multisymplectic field theories defined on $(\widetilde{M},\widetilde{\omega})$
(see, \cite{bhr19,frs13b,frs13}). I will now present a contact analogue of
$\mathfrak{g}_{\bullet}(\widetilde{M},\widetilde{\omega})$. At the same time,
it should be a {multicontact version} of the
Jacobi bundle of a standard contact manifold. In order to motivate my
definition, I remark that sections of the Jacobi bundle of a contact manifold
$(M,C)$ can be understood as homogeneous functions on the symplectization
$(\widetilde{M},\widetilde{\omega})$ (see Section \ref{SecCont}). The Jacobi
bracket is then just the restriction to $C_{\circ}^{\infty}(\widetilde
{M})\simeq\Gamma(N)$ of the Poisson bracket on $(\widetilde{M},\widetilde
{\omega})$. This suggests to look for the contact analogue of $\mathfrak{g}%
_{\bullet}(\widetilde{M},\widetilde{\omega})$ on the (truncated) homogeneous
de Rham complex of Section \ref{SecHom}.

Propositions (\ref{23}) and (\ref{13}) show that the truncated homogeneous de
Rham complex (\ref{5}) provides a resolution of $\Gamma_{\mathrm{Ham}}(N)$. In
its turn, $\Gamma_{\mathrm{Ham}}(N)$ is a Lie algebra. In \cite{b...98}
Barnich, Fulp, Lada, and Stasheff proved that this situation is precisely a
source of $L_{\infty}$-algebras. Namely, whenever the underlying vector space
of a Lie algebra is resolved by a chain complex, then there is an $L_{\infty}%
$-algebra structure on chains such that 1) the unary operation agrees with the
differential, and 2) the binary bracket induces the Lie bracket in homology.
It immediately follows that there is an $L_{\infty}$-algebra structure on the
underlying graded vector space of (\ref{5}) such that 1) the unary operation
is the de Rham differential, and 2) the binary operation induces the Lie
bracket between Hamiltonian sections in homology. Actually, such $L_{\infty
}$-algebra can be described in terms of the $L_{\infty}$-algebra
$\mathfrak{g}_{\bullet}(\widetilde{M},\widetilde{\omega})$, at least in the
case $R\neq0$, as shown below.

\begin{proposition}
\label{27}If $R\neq0$, the operations $\lambda_{\ell}$ on $\mathfrak{g}%
_{\bullet}(\widetilde{M},\widetilde{\omega})$ restrict to the homogeneous
truncated de Rham complex.
\end{proposition}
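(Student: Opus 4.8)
The plan is to verify separately that $\lambda_1$ and the higher brackets $\lambda_\ell$ ($\ell\geq 2$) send the homogeneous truncated complex (\ref{5}) into itself. For $\lambda_1=d$ there is nothing new: the de Rham differential commutes with $L_\Delta$, hence preserves homogeneity, and the complex (\ref{5}) was already observed to be well defined (with $d$ landing in $\Omega^{n-1}_{\mathrm{Ham}}(M,C)$ in top degree). So it remains to treat $\lambda_\ell$ for $\ell\geq 2$. Since $\lambda_\ell$ vanishes unless all arguments lie in degree $0$, I only consider inputs $\mu_1,\ldots,\mu_\ell\in\Omega^{n-1}_{\mathrm{Ham}}(M,C)$, the degree-$0$ piece of (\ref{5}). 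By degree reasons the output $\lambda_\ell(\mu_1,\ldots,\mu_\ell)=-(-)^\ell i_{X_{\mu_1}}\cdots i_{X_{\mu_\ell}}\widetilde\omega$ sits in $\mathfrak g_{\ell-2}$, i.e.\ in $\Omega^{n-\ell+1}(\widetilde M)$ (and it vanishes for $\ell>n+1$, as $\widetilde\omega$ is an $(n+1)$-form). I must show this form is homogeneous, and that for $\ell=2$ it in fact lands in $\Omega^{n-1}_{\mathrm{Ham}}(M,C)$.

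First I record that the value of $\lambda_\ell$ is independent of the choice of the Hamiltonian vector fields $X_{\mu_k}$: if $V$ satisfies $i_V\widetilde\omega=0$, then, moving $i_V$ past the remaining anticommuting contractions to hit $\widetilde\omega$ directly, $i_{X_1}\cdots i_V\cdots i_{X_\ell}\widetilde\omega=\pm\, i_{X_1}\cdots\widehat{i_V}\cdots i_{X_\ell}\,i_V\widetilde\omega=0$. Hence I may compute with any convenient choice. Now for $\mu_k\in\Omega^{n-1}_{\mathrm{Ham}}(M,C)$ there is, by definition, a \emph{projectable} associated Hamiltonian vector field; since $R\neq 0$, Proposition \ref{10} identifies it with the canonical lift $\widetilde X_k$ of a multicontact vector field $X_k$. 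This is exactly where the hypothesis $R\neq 0$ is used: it forces the relevant Hamiltonian vector fields to be the tautological lifts.

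Next I claim $[\Delta,\widetilde X_k]=0$. Indeed, $\widetilde X_k$ is the restriction of the tautological lift $X_k^{\ast}$ of $X_k$ to $\wedge^n T^{\ast}M$, whose flow acts by fibrewise-linear bundle automorphisms and therefore commutes with fibre scaling, i.e.\ with the flow of the Euler field $\Delta$; alternatively this is immediate from the coordinate expression (\ref{26}), all of whose coefficients are pulled back from $M$ and hence are annihilated by $\Delta=p\,\partial/\partial p$. Using $[L_\Delta,i_X]=i_{[\Delta,X]}$ repeatedly together with $L_\Delta\widetilde\omega=\widetilde\omega$ (established in the proof of Proposition \ref{10}) then gives
\[
L_\Delta\bigl(i_{\widetilde X_1}\cdots i_{\widetilde X_\ell}\widetilde\omega\bigr)=\sum_{k} i_{\widetilde X_1}\cdots i_{[\Delta,\widetilde X_k]}\cdots i_{\widetilde X_\ell}\widetilde\omega + i_{\widetilde X_1}\cdots i_{\widetilde X_\ell}\widetilde\omega = i_{\widetilde X_1}\cdots i_{\widetilde X_\ell}\widetilde\omega,
\]
so $\lambda_\ell(\mu_1,\ldots,\mu_\ell)$ is homogeneous, as required.

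Finally, for $\ell=2$ the output is the $(n-1)$-form $-i_{\widetilde X_1}i_{\widetilde X_2}\widetilde\omega$, which is Hamiltonian with associated vector field $[\widetilde X_1,\widetilde X_2]=\widetilde{[X_1,X_2]}$; the latter is projectable, onto the multicontact field $[X_1,X_2]$ (the stabilizer $\mathfrak X_C$ being a Lie algebra), so the output lies in $\Omega^{n-1}_{\mathrm{Ham}}(M,C)$. The main obstacle is the homogeneity step, and it is precisely there that $R\neq 0$ enters through Proposition \ref{10}: without it a Hamiltonian vector field may acquire a vertical component (compare the $\partial/\partial p$ of the remark following Proposition \ref{10}) that fails to commute with $\Delta$, so that $[\Delta,X_{\mu_k}]\neq 0$ and homogeneity of $\lambda_\ell$ can break down.
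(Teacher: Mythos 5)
Your proof is correct and takes essentially the same route as the paper's: since $\widetilde{\omega}$ is itself homogeneous, everything reduces to Proposition \ref{10}, which (using $R\neq 0$) identifies the projectable Hamiltonian vector fields with the canonical lifts $\widetilde{X}$ of multicontact fields, and these satisfy $[\Delta,\widetilde{X}]=0$ so their insertions preserve homogeneity. The extra details you supply beyond the paper's three-line argument --- the independence of $\lambda_{\ell}$ of the choice of Hamiltonian vector fields, and the check that for $\ell=2$ the output lies in $\Omega_{\mathrm{Ham}}^{n-1}(M,C)$ with projectable Hamiltonian vector field $[\widetilde{X}_{1},\widetilde{X}_{2}]$ --- are sound elaborations of the same approach; the paper leaves the first implicit and defers the second to the proof of Proposition \ref{proposition}.
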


\begin{proof}
Recall that $\widetilde{\omega}$ is itself homogeneous. Thus, it is enough to
prove that, whenever $\mu\in\Omega_{\mathrm{Ham}}^{n-1}(M,C)$, then the
insertion $i_{X_{\mu}}$ of an associated projectable Hamiltonian vector
field $X_{\mu}$ preserves homogenous forms. This
immediately follows from Proposition \ref{10}. Indeed, $X_{\mu}$ is the
locally Hamiltonian lift of a multicontact vector field on $(M,C)$ and,
therefore, $i_{X_{\mu}}$ has the required property (see, for instance,
coordinate Formula (\ref{26})).
\end{proof}

Let $R\neq0$. Collecting the above results, I get the following

\begin{theorem}\label{Theor}
There is an $L_{\infty}$-algebra $\mathfrak{g}_{\bullet}(M,C)=\bigoplus
_{i=0}^{n-1}\mathfrak{g}_{i}(M,C)$, concentrated in degrees $0,\ldots,n-1$,
where
\[
\mathfrak{g}_{i}(M,C):=\left\{
\begin{array}
[c]{ll}%
\Omega_{\mathrm{Ham}}^{n-1}(M,C) & \text{if }i=0\\
\Omega_{\circ}^{n-i-1}(\widetilde{M}) & \text{if }0<i\leq n-1
\end{array}
\right.  .
\]
The operations in $\mathfrak{g}_{\bullet}(M,C)$ are defined as follows
$(\mathfrak{g}_{\bullet}(M,C),\lambda_{1})$ is the truncated homogeneous de
Rham complex 
\begin{equation}\label{Res}
0\longleftarrow\Omega_{\mathrm{Ham}}^{n-1}(M,C)\overset{d}{\longleftarrow
}\Omega_{\circ}^{n-2}(\widetilde{M})\longleftarrow\cdots\overset
{d}{\longleftarrow}\Omega_{\circ}^{1}(\widetilde{M})\overset{d}{\longleftarrow
}C_{\circ}^{\infty}(\widetilde{M})\longleftarrow0,
\end{equation}
and, for $\ell>0$,
\[
\lambda_{\ell}(\mu_{1},\ldots,\mu_{\ell})=\left\{
\begin{array}
[c]{cc}%
-(-)^{\ell}i_{X_{\mu_{1}}}\cdots i_{X_{\mu_{\ell}}}\widetilde{\omega} &
\text{if }| \mu_{1} |+\cdots+| \mu_{\ell} |=0\\
0 & \text{if }| \mu_{1} |+\cdots+| \mu_{\ell} |>0
\end{array}
\right.  ,
\]
where $X_{\mu}$ is a projectable Hamiltonian vector field associated to the
homogeneous Hamiltonian form $\mu\in\Omega_{\mathrm{Ham}}^{n-1}(M,C)$.
\end{theorem}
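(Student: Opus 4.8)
The plan is to realize $\mathfrak{g}_{\bullet}(M,C)$ as an $L_{\infty}$-subalgebra of the Rogers--Zambon $L_{\infty}$-algebra $\mathfrak{g}_{\bullet}(\widetilde{M},\widetilde{\omega})$ of the pre-multisymplectization. Once $\mathfrak{g}_{\bullet}(M,C)$ is exhibited as a graded subspace closed under every operation $\lambda_{\ell}$, the coherence relations (\ref{24}) for $\mathfrak{g}_{\bullet}(M,C)$ follow by merely restricting the corresponding relations for $\mathfrak{g}_{\bullet}(\widetilde{M},\widetilde{\omega})$, so no separate computation of the higher Jacobi identities is needed. The graded inclusion is immediate degree by degree: in positive degrees $\Omega_{\circ}^{n-i-1}(\widetilde{M})\subset\Omega^{n-i-1}(\widetilde{M})$ by definition, while in degree $0$ one has $\Omega_{\mathrm{Ham}}^{n-1}(M,C)\subset\Omega_{\mathrm{Ham}}^{n-1}(\widetilde{M},\widetilde{\omega})$, since a homogeneous form admitting a \emph{projectable} associated Hamiltonian vector field is in particular Hamiltonian. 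I would also record that the operations of $\mathfrak{g}_{\bullet}(M,C)$ coincide literally with the restrictions of those of $\mathfrak{g}_{\bullet}(\widetilde{M},\widetilde{\omega})$: the latter are independent of the chosen associated Hamiltonian vector field (the well-definedness built into the pre-multisymplectic construction of \cite{z12}), so evaluating them on a projectable choice returns the same form.

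It then remains only to verify closure. Closure under $\lambda_{1}=d$ is precisely the statement that (\ref{Res}) is a subcomplex: the de Rham differential commutes with $L_{\Delta}$ and hence preserves homogeneity, and for $\mu\in\Omega_{\circ}^{n-2}(\widetilde{M})$ the form $d\mu$ is homogeneous, closed, and therefore Hamiltonian with the (trivially projectable) associated vector field $0$, so $d\mu\in\Omega_{\mathrm{Ham}}^{n-1}(M,C)$. For $\ell\ge 2$ the operation $\lambda_{\ell}$ vanishes unless all its arguments lie in degree $0$, so it suffices to test it on $\mu_{1},\dots,\mu_{\ell}\in\Omega_{\mathrm{Ham}}^{n-1}(M,C)$. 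Using $R\neq 0$, each associated projectable Hamiltonian vector field is, by Proposition \ref{10}, the lift $\widetilde{X}_{j}$ of a multicontact vector field $X_{j}\in\mathfrak{X}_{C}$. By Proposition \ref{27} the composite insertion $i_{\widetilde{X}_{1}}\cdots i_{\widetilde{X}_{\ell}}$ preserves homogeneity of the homogeneous form $\widetilde{\omega}$, so $\lambda_{\ell}(\mu_{1},\dots,\mu_{\ell})$ is a homogeneous $(n-\ell+1)$-form, which is exactly the degree-$(\ell-2)$ slot $\Omega_{\circ}^{n-\ell+1}(\widetilde{M})$ of $\mathfrak{g}_{\bullet}(M,C)$ whenever $\ell\ge 3$ (the form being $0$ once $\ell>n+1$).

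The one genuinely non-formal point, and the step I expect to be the main obstacle, is the case $\ell=2$, where the target is again degree $0$, namely $\Omega_{\mathrm{Ham}}^{n-1}(M,C)$, so one must check not merely homogeneity but also that $\lambda_{2}(\mu_{1},\mu_{2})$ stays Hamiltonian \emph{with a projectable} associated vector field. Here I would use that in the Rogers--Zambon algebra $\lambda_{2}$ of two Hamiltonian forms is Hamiltonian with associated vector field the commutator $[X_{\mu_{1}},X_{\mu_{2}}]$. Since the natural lift $X\mapsto\widetilde{X}$ to $\widetilde{M}$ is a Lie algebra homomorphism, one has $[\widetilde{X}_{1},\widetilde{X}_{2}]=\widetilde{[X_{1},X_{2}]}$, the lift of $[X_{1},X_{2}]\in\mathfrak{X}_{C}$ (recall $\mathfrak{X}_{C}$ is closed under brackets); in particular this commutator is projectable, so $\lambda_{2}(\mu_{1},\mu_{2})\in\Omega_{\mathrm{Ham}}^{n-1}(M,C)$. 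This completes the verification of closure and, by the inheritance of the relations (\ref{24}), shows that $(\mathfrak{g}_{\bullet}(M,C),\{\lambda_{\ell}\})$ is an $L_{\infty}$-algebra. That its underlying complex (\ref{Res}) resolves the Lie algebra $\Gamma_{\mathrm{Ham}}(N)$, with $\lambda_{2}$ inducing the bracket in homology, then follows from Propositions \ref{23} and \ref{13}, as anticipated by the Barnich--Fulp--Lada--Stasheff mechanism \cite{b...98}.
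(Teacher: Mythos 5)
Your proposal is correct and follows essentially the same route as the paper: the theorem is obtained by restricting the Rogers--Zambon $L_{\infty}$-algebra $\mathfrak{g}_{\bullet}(\widetilde{M},\widetilde{\omega})$ to the truncated homogeneous de Rham complex, with closure under the operations supplied by Proposition \ref{10} and Proposition \ref{27} exactly as you argue. Your explicit check that $\lambda_{2}(\mu_{1},\mu_{2})$ remains Hamiltonian with \emph{projectable} associated vector field $[X_{\mu_{1}},X_{\mu_{2}}]$ is a point the paper treats only later, inside the proof of Proposition \ref{proposition}, so you have merely made the paper's implicit step explicit rather than taken a different path.
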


Moreover, one has the

\begin{proposition}\label{proposition}
The binary operation in $\mathfrak{g}_{\bullet}(M,C)$ induces the Lie bracket
on $\Gamma_{\mathrm{Ham}}(N)$ in homology.
\end{proposition}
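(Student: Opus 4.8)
The plan is to trace through the identifications established in the previous propositions and check that the binary operation $\lambda_2$ descends to the correct Lie bracket in degree-$0$ homology. First I recall what the degree-$0$ homology of $\mathfrak{g}_\bullet(M,C)$ is. By Proposition \ref{23} the truncated homogeneous complex \eqref{Res} is acyclic except in its last term, where its homology is $\Omega_{\mathrm{Ham}}^{n-1}(M,C)/B_\circ$. By Proposition \ref{13} this quotient is canonically isomorphic to $\Gamma_{\mathrm{Ham}}(N)$, via the assignment $\nu \mapsto \widetilde\nu$ (with $\widetilde\nu = i_{\widetilde X}\widetilde\theta$ whenever $\nu = \theta(X)$). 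So I must show that, under this isomorphism, the bracket induced by $\lambda_2$ on the quotient agrees with the Lie bracket on $\Gamma_{\mathrm{Ham}}(N)$ coming from $\mathfrak{X}_C/\Gamma(D)$.

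The key computation is to evaluate $\lambda_2$ on two homogeneous Hamiltonian forms $\mu_1, \mu_2 \in \Omega_{\mathrm{Ham}}^{n-1}(M,C)$. Since $|\mu_1| = |\mu_2| = 0$, the formula gives $\lambda_2(\mu_1,\mu_2) = -i_{X_{\mu_1}}i_{X_{\mu_2}}\widetilde\omega$, where the $X_{\mu_j}$ are projectable Hamiltonian vector fields. Writing $\mu_j = \widetilde{\nu_j}$ for $\nu_j = \theta(X_j) \in \Gamma_{\mathrm{Ham}}(N)$ and $X_{\mu_j} = \widetilde{X_j}$ the locally Hamiltonian lifts (using $R\neq0$ and Proposition \ref{10} to guarantee $X_{\mu_j} = \widetilde{X_j}$), the step I would carry out is the standard multisymplectic manipulation
\[
\lambda_2(\mu_1,\mu_2) = -i_{\widetilde{X_1}}i_{\widetilde{X_2}}\widetilde\omega = i_{\widetilde{X_1}}d\widetilde{\nu_2} = L_{\widetilde{X_1}}\widetilde{\nu_2} - d\, i_{\widetilde{X_1}}\widetilde{\nu_2},
\]
using $i_{\widetilde{X_2}}\widetilde\omega = -d\widetilde{\nu_2}$. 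The term $d\,i_{\widetilde{X_1}}\widetilde{\nu_2}$ lies in $B_\circ$ and hence vanishes in homology, so in the quotient $\lambda_2(\mu_1,\mu_2)$ represents the class of $L_{\widetilde{X_1}}\widetilde{\nu_2}$.

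It then remains to identify $L_{\widetilde{X_1}}\widetilde{\nu_2}$ with the form $\widetilde{[\nu_1,\nu_2]}$ representing the Lie bracket. I would verify that $L_{\widetilde{X_1}}\widetilde{\nu_2} = \widetilde{\theta([X_1,X_2])}$, which says the construction $\nu \mapsto \widetilde\nu$ is equivariant for the Lie derivative action of $\widetilde{X_1}$, and that $\theta([X_1,X_2])$ is exactly the bracket of $\nu_1$ and $\nu_2$ in $\Gamma_{\mathrm{Ham}}(N) \simeq \mathfrak{X}_C/\Gamma(D)$. The cleanest route is local coordinates: using $\widetilde{\nu_2} = p\,\nu_2^a\Theta_a$ and the coordinate formula \eqref{26} for $\widetilde{X_1}$, compute $L_{\widetilde{X_1}}(p\,\nu_2^a\Theta_a)$ and check it equals $p\,(\theta([X_1,X_2]))^a\Theta_a$ modulo $B_\circ$; alternatively one argues invariantly that the lift $X \mapsto \widetilde X$ is a Lie algebra morphism and that $\widetilde{\theta(X)} = i_{\widetilde X}\widetilde\theta$ is natural, so $L_{\widetilde{X_1}}\widetilde{\nu_2} = L_{\widetilde{X_1}}i_{\widetilde{X_2}}\widetilde\theta = i_{[\widetilde{X_1},\widetilde{X_2}]}\widetilde\theta + i_{\widetilde{X_2}}L_{\widetilde{X_1}}\widetilde\theta$, and the second term is absorbed because $L_{\widetilde{X_1}}\widetilde\theta$ is proportional to $\widetilde\theta$ (Remark \ref{20} lifted to $\widetilde M$), giving a $B_\circ$-contribution. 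I expect the main obstacle to be bookkeeping the $B_\circ$ terms and the sign conventions carefully enough to land precisely on the Lie bracket rather than its negative; the conceptual content — that $\lambda_2$ reduces to a Lie derivative modulo exact forms, and that the lift is a Lie algebra morphism — is forced by the constructions already in place.
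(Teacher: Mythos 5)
Your proposal is correct in outline and follows essentially the same route as the paper: both arguments reduce the statement to the identification of Proposition \ref{13}, by showing that modulo $B_{\circ}$ the form $\lambda_{2}(\mu_{1},\mu_{2})$ represents $\theta(\pi_{\ast}[X_{\mu_{1}},X_{\mu_{2}}])=[\theta(\pi_{\ast}X_{\mu_{1}}),\theta(\pi_{\ast}X_{\mu_{2}})]$. The paper's variant is slightly more economical: instead of decomposing $\lambda_{2}(\mu_{1},\mu_{2})$ itself via Cartan's formula, it differentiates, computing $-d\lambda_{2}(\mu_{1},\mu_{2})=i_{[X_{\mu_{1}},X_{\mu_{2}}]}\widetilde{\omega}$, which exhibits $\lambda_{2}(\mu_{1},\mu_{2})$ as a homogeneous Hamiltonian form admitting the projectable Hamiltonian vector field $[X_{\mu_{1}},X_{\mu_{2}}]$; the surjectivity part of the proof of Proposition \ref{13} then directly yields that its class is $\theta(\pi_{\ast}[X_{\mu_{1}},X_{\mu_{2}}])$. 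Your version, tracking the representative in $K_{\mathrm{Ham}}$ through $\lambda_{2}(\mu_{1},\mu_{2})\equiv L_{\widetilde{X}_{1}}\widetilde{\nu}_{2} \bmod B_{\circ}$, is equivalent; just note that placing $d\,i_{\widetilde{X}_{1}}\widetilde{\nu}_{2}$ in $B_{\circ}$ requires $i_{\widetilde{X}_{1}}\widetilde{\nu}_{2}$ to be homogeneous, i.e.~$[\Delta,\widetilde{X}_{1}]=0$, which is exactly the content of Proposition \ref{27}.

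One justification in your last step is, however, wrong as stated, even though the conclusion survives. You dispose of the term $i_{\widetilde{X}_{2}}L_{\widetilde{X}_{1}}\widetilde{\theta}$ by saying that $L_{\widetilde{X}_{1}}\widetilde{\theta}$ is \emph{proportional} to $\widetilde{\theta}$ (invoking Remark \ref{20} ``lifted to $\widetilde{M}$''), ``giving a $B_{\circ}$-contribution.'' If one only knew $L_{\widetilde{X}_{1}}\widetilde{\theta}=f\widetilde{\theta}$, then $i_{\widetilde{X}_{2}}L_{\widetilde{X}_{1}}\widetilde{\theta}=f\,i_{\widetilde{X}_{2}}\widetilde{\theta}=f\,\widetilde{\nu}_{2}$, which is homogeneous but in no way exact: it does \emph{not} lie in $B_{\circ}$ for general $f$, so the term would not be absorbed. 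What rescues the argument is the stronger fact, built into the construction of the lift in the paper: $\widetilde{X}$ is the restriction of the unique lift of $X$ preserving the tautological $n$-form, so $L_{\widetilde{X}_{1}}\widetilde{\theta}=0$ exactly and the term simply vanishes. (Remark \ref{20} concerns sections $A$ of $\widetilde{M}$, for which only proportionality $L_{X}A=fA$ holds; its correct analogue upstairs is the exact invariance of $\widetilde{\theta}$, not proportionality.) With that repair your chain $\lambda_{2}(\mu_{1},\mu_{2})\equiv L_{\widetilde{X}_{1}}\widetilde{\nu}_{2}=i_{[\widetilde{X}_{1},\widetilde{X}_{2}]}\widetilde{\theta}=\widetilde{\theta([X_{1},X_{2}])} \bmod B_{\circ}$ is sound; for the last equality you do not even need the full Lie-morphism property $[\widetilde{X}_{1},\widetilde{X}_{2}]=\widetilde{[X_{1},X_{2}]}$ (which anyway follows from Proposition \ref{10} when $R\neq 0$), but only that $[\widetilde{X}_{1},\widetilde{X}_{2}]$ is projectable over $[X_{1},X_{2}]$ together with the identity $i_{Y}\widetilde{\theta}=\widetilde{\theta(\pi_{\ast}Y)}$ already used in the proof of Proposition \ref{13}.
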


\begin{proof}
Let $\mu_{1},\mu_{2}\in\Omega_{\mathrm{Ham}}^{n-1}(M,C)$. Their binary
operation $\lambda_{2}(\mu_{1},\mu_{2})$ in $\mathfrak{g}_{\bullet
}(M,C)$ is a homogeneous, Hamiltonian form with a projectable Hamiltonian
vector field $[X_{\mu_{1}},X_{\mu_{2}}]$, where $X_{\mu_{1}%
},X_{\mu_{2}}$ are projectable Hamiltonian vector fields associated to
$\mu_{1},\mu_{2}$ respectively. Indeed, $[X_{\mu_{1}},X_{\mu_{2}%
}]$ is projectable, and, since $di_{X_{\mu_{2}}}\widetilde{\omega}%
=dd\mu_{2}=0$, and $L_{X_{\mu_{1}}}\widetilde{\omega}=0$, one gets
\[
-d\lambda_{2}(\mu_{1},\mu_{2})=di_{X_{\mu_{1}}}i_{X_{\mu_{2}}%
}\widetilde{\omega}=L_{X_{\mu_{1}}}i_{X_{\mu_{1}}}\widetilde{\omega
}=[L_{X_{\mu_{1}}},i_{X_{\mu_{1}}}]\widetilde{\omega}=i_{[X_{\mu_{1}%
},X_{\mu_{2}}]}\widetilde{\omega}.
\]
Thus, the homology class of $\lambda_{2}(\mu_{1},\mu_{2})$ in the
truncated, homogeneous de Rham complex identify with $\theta(\pi_{\ast
}[X_{\mu_{1}},X_{\mu_{2}}])$ (see the proof of Proposition \ref{13}),
which is given by
\[
\theta(\pi_{\ast}[X_{\mu_{1}},X_{\mu_{2}}])=\theta([\pi_{\ast}%
X_{\mu_{1}},\pi_{\ast}X_{\mu_{2}}])=[\theta(\pi_{\ast}X_{\mu_{1}%
}),\theta(\pi_{\ast}X_{\mu_{2}})].
\]

\end{proof}

\begin{remark}
As already remarked, complex (\ref{Res}) is actually a resolution of $\Gamma_\mathrm{Ham} (N)$. In other words, there is a quasi-isomorphism of complexes:
\[
\xymatrix{0 & \mathfrak{g}_0 (M,C) \ar[d] \ar[l]& \mathfrak{g}_1 (M,C) \ar[l] \ar[d] & \cdots \ar[l] & \mathfrak{g}_n (M,C)  \ar[d]\ar[l] & 0 \ar[l] \\
               0 & \Gamma_{\mathrm{Ham} (N)} \ar[l] & 0 \ar[l] & \cdots \ar[l] & 0 \ar[l] & 0 \ar[l]
}
\]
Now, Proposition \ref{proposition} implies that such quasi-isomorphism extends to an $L_\infty$-quasi-isomorphism so that the $L_\infty$-algebra $\mathfrak{g}_\bullet (M,C)$ and the Lie algebra $\Gamma_\mathrm{Ham} (N)$ are isomorphic objects in the homotopy category of $L_\infty$-algebras (see, e.g., \cite{v12} for an introduction to the homotopy theory of $L_\infty$-algebras). In particular, from the point of view of homotopical algebra, $\mathfrak{g}_\bullet (M,C)$ contains precisely the same information as $\Gamma_\mathrm{Ham} (N)$.  Hence, the situation is different from that in the multisymplectic case. Namely the Rogers and Zambon $L_\infty$-algebra is not $L_\infty$-quasi-isomorphic to the Lie algebra of Hamiltonian vector fields in general, and, therefore, its quasi-isomorphism class contains more information.

Exploring the applications of the multicontact $L_\infty$-algebra goes beyond the scopes of this paper and the ultimate utility of ``replacing $\Gamma_\mathrm{Ham} (N)$ with $\mathfrak{g}_\bullet (M,C)$'' remains an open question. In this respect, I can only add few more (intentionally vague) words. Every system of (possibly nonlinear) partial differential equations (PDE) can be geometrically understood as a manifold with a distribution $(\mathcal E, C)$, i.e.~a pre-multicontact manifold (up to conceptually irrelevant regularity issues). Thus, to a system of PDEs, one can attach an $L_\infty$-algebra $\mathfrak g (\mathcal E, C)$ encoding infinitesimal symmetries of the system. There is another, completely different, $L_\infty$-algebra that can be attached to a system of PDEs as announced in \cite{v14}. Denote it by $\mathfrak g_\infty$. The $L_\infty$-algebra $\mathfrak g_\infty$ encodes, among other data, so called \emph{higher} infinitesimal symmetries of $(\mathcal E, C)$ \cite{b...99}. It is an interesting open issue ``whether or not $\mathfrak g (\mathcal E, C)$ and $\mathfrak g_\infty$ interact or not'', and, if yes, whether or not one can extract new information on $(\mathcal E, C)$ from this interaction.
\end{remark}

\subsection*{Acknowledgments}
I thank the anonymous referee for carefully reading the manuscript and for suggesting several revisions that improved the exposition.

\appendix

\section{Lie Algebroids and An Alternative Description of Homogeneous
Forms\label{SecLie}}
{In this appendix I sketch an alternative description of the $L_\infty$-algebra of a pre-multicontact manifold, independent of the multisymplectization. Such description exploits the technology of Lie algebroids and their representation.}

Recall that a \emph{Lie algebroid} over a manifold $M$ is a vector bundle
$A\to M$ equipped with 1) a $C^{\infty}(M)$-linear map
$\varrho:\Gamma(A)\to\mathfrak{X}(M)$ called the \emph{anchor},
and 2) a Lie bracket $[-,-]$ on $\Gamma(A)$ such that
\[
\lbrack\alpha,f\beta]=\varrho(\alpha)(f)\beta+f[\alpha,\beta],\quad
\alpha,\beta\in\Gamma(A),\quad f\in C^{\infty}(M).
\]


\begin{example}
\label{17}Let $E\to M$ be a vector bundle. An $\mathbb{R}$-linear
operator $\square:\Gamma(E)\to\Gamma(E)$ is a \emph{derivation of
}$\Gamma(E)$ if there exists a (necessarily unique) vector field
$s(\square)$ on $M$, sometimes called the \emph{symbol of} $\square$,
such that
\[
\square(f\varepsilon)=s(\square)(f)\varepsilon+f\square(\varepsilon
),\quad\varepsilon\in\Gamma(E),\quad f\in C^{\infty}(M).
\]
Denote by $\operatorname{Der}E$ the space of derivations of $\Gamma(E)$. It is
a $C^{\infty}(M)$-module with the obvious multiplication, and, a Lie algebra
with Lie bracket given by the commutator. Even more, it is the module, and Lie
algebra of sections of a Lie algebroid $\operatorname{der}E\to M$,
with anchor $\operatorname{Der}E\to\mathfrak{X}(M)$, given by
$\square\mapsto s(\square)$. There is a more geometric description of
derivations of $\Gamma(E)$. Namely, denote by $\mathfrak{X}_{\mathrm{lin}%
}(E^{\ast})\subset\mathfrak{X}(E^{\ast})$ the subspace of \emph{linear vector
fields on }$E^{\ast}$, i.e.~those vector fields on the dual bundle $E^{\ast}$
preserving fiberwise linear functions on $E^{\ast}$. Linear vector fields are
projectable over $M$. For $X\in\mathfrak{X}_{\mathrm{lin}}(E^{\ast})$ denote
by $\underline{X}\in\mathfrak{X}(M)$ its projection. It is easy to see that
$\mathfrak{X}_{\mathrm{lin}}(E^{\ast})$ is a $C^{\infty}(M)$-submodule and a
Lie subalgebra of $\mathfrak{X}(E^{\ast})$. Now, fiberwise linear functions on
$E^{\ast}$ identify with sections of $E$, and the map $\mathfrak{X}%
_{\mathrm{lin}}(E^{\ast})\to\operatorname{Der}E$, $X\mapsto
X|_{\Gamma(E)}$ is an isomorphism of $C^{\infty}(M)$-modules and of Lie
algebras, such that $\underline{X}= s(X|_{\Gamma(E)})$.
\end{example}

Let $A\to M$ be a Lie algebroid. Recall that a
\emph{representation of} $A$ is a vector bundle $V\to M$ equipped
with a \emph{flat }$A$\emph{-connection} $\nabla$, i.e.~a $C^{\infty}%
(M)$-linear map $\nabla:\Gamma(A)\to\operatorname{Der}V$, denoted
$\alpha\mapsto\nabla_{\alpha}$, such that 1) $ s(\nabla_{\alpha
})=\varrho(\alpha)$, and 2) $[\nabla_{\alpha},\nabla_{\beta}]=\nabla
_{\lbrack\alpha,\beta]}$, $\alpha,\beta\in\Gamma(A)$. Let $(V,\nabla)$ be a
representation of $A$. The graded vector space $\operatorname{Alt}%
(A,V):=\operatorname{Alt}(\Gamma(A),\Gamma(V))$ of alternating, $C^{\infty
}(M)$-multilinear, $\Gamma(V)$-valued forms on $\Gamma(A)$ is naturally
equipped with a homological operator $d_{A}$ given by the following
\emph{Chevalley-Eilenberg formula}:
\begin{align*}
&  (d_{A}\varpi)(\alpha_{1},\ldots,\alpha_{k+1})\\
&  :=\sum_{i}(-)^{i}\nabla_{\alpha_{i}}(\varpi(\ldots,\widehat{\alpha_{i}%
},\ldots))+\sum_{i<j}(-)^{i+j}\varpi([\alpha_{i},\alpha_{j}],\ldots
,\widehat{\alpha_{i}},\ldots,\widehat{\alpha_{j}},\ldots),
\end{align*}
where $\varpi\in\operatorname{Alt}^{k}(\Gamma(A),\Gamma(V))$ is an alternating
form with $k$-entries, $\alpha_{1},\ldots,\alpha_{k+1}\in\Gamma(A)$, and a hat
$\widehat{\left(  -\right)  }$ denotes omission.

\begin{example}
\label{25}There is a \emph{tautological representation }of $\operatorname{der}%
E\to M$, given by $(E,\nabla)$, with structure flat connection
$\nabla:\operatorname{Der}E\to\operatorname{Der}E$ being the
identity, i.e.~$\nabla_{\square}\varepsilon=\square(\varepsilon)$,
$\square\in\operatorname{Der}E$, $\varepsilon\in\Gamma(E)$. The associated
complex $(\operatorname{Alt}(\operatorname{der}E,E),d_{\operatorname{der}E})$
is sometimes called the ($E$\emph{-valued}) \textrm{Der}\emph{-complex}
\cite{r80}. Let $\Delta$ be the Euler vector field on $E^{\ast}$. One can
define a subcomplex $(\Omega_{\circ}(E^{\ast}),d)$ of the de Rham complex of
$E^{\ast}$ exactly as in Section \ref{SecHom}. Namely, a differential form
$\omega$ on $E^{\ast}$ is in $\Omega_{\circ}(E^{\ast})$ if $L_{\Delta}%
\omega=\omega$. In particular elements in $C_{\circ}^{\infty}(E^{\ast})$ are
fiberwise linear functions on $E^{\ast}$, i.e.~sections of $E$. Elements of
$\Omega_{\circ}(E^{\ast})$ are called \emph{linear differential forms} on
$E^{\ast}$ and are preserved by the exterior differential. There is a
canonical embedding of graded vector spaces
\begin{equation}
\iota:\Omega_{\circ}(E^{\ast})\to\operatorname{Alt}%
(\operatorname{der}E,E), \label{11}%
\end{equation}
which can be defined as follows. Denote by $\varphi:\mathfrak{X}%
_{\mathrm{lin}}(E^{\ast})\to\operatorname{Der}E$, $X\mapsto
X|_{\Gamma(E)}$ the isomorphism of Example \ref{17}. Notice that $\Delta
\in\mathfrak{X}_{\mathrm{lin}}(E^{\ast})$ and $\varphi(\Delta)$ is the
identity of $\Gamma(E^{\ast})$. Moreover, for $X\in\mathfrak{X}_{\mathrm{lin}%
}(E^{\ast})$, the insertion $i_{X}$ maps linear differential forms to linear
differential forms. Thus, let $\mu\in\Omega_{\circ}^{k}(E^{\ast})$. Define
$\iota(\mu)\in\operatorname{Alt}^{k}(\operatorname{Der}E,\Gamma(E))$ by
putting
\[
\iota(\mu)(\square_{1},\ldots,\square_{k}):=i_{\varphi^{-1}(\square_{k}%
)}\cdots i_{\varphi^{-1}(\square_{1})}\mu,\quad\square_{1},\ldots
,\square_{k}\in\operatorname{Der}E
\]
It is easy to see that $\iota$ is injective. Moreover, it is a cochain map.
Finally, dimension counting proves that\emph{ }$\iota$\emph{ is also
surjective when }$E$\emph{ is a line bundle}.
\end{example}

The above example provides an alternative description of the homogenous de
Rham complex of Section \ref{SecHom}. Indeed, use the same notations as in
Section \ref{SecHom}. Since $\widetilde{M}$ is dense in $\wedge^{n}C^{0}$,
then the restriction of linear forms on $\wedge^{n}C^{0}$ to homogeneous
forms on $\widetilde{M}$ is an isomorphism. Moreover, $\wedge^{n}C^{0
}\simeq L^{\ast}$, where $L:=\wedge^{n}N$ is a line bundle. Collecting
previous remarks one gets
\[
(\Omega_{\circ}(\widetilde{M}),d)\simeq(\operatorname{Alt}(\operatorname{der}%
L,L),d_{\operatorname{der}L}).
\]
Finally, recall that $\widetilde{\theta}\in\Omega_{\circ}^{n}(\widetilde{M})$.
It is easy to see that the corresponding element $\iota(\widetilde{\theta})$
in $\operatorname{Alt}(\operatorname{der}L,L)$ is given by
\begin{equation}
\iota(\widetilde{\theta})(\square_{1},\ldots,\square_{n}):=\theta
( s(\square_{1}))\wedge\cdots\wedge\theta( s(\square_{n}%
)),\quad\square_{1},\ldots,\square_{n}\in\operatorname{der}L.\label{18}%
\end{equation}
Using formula (\ref{18}) one can also find $\iota(\widetilde{\omega}%
)=\iota(d\widetilde{\theta})=d_{\operatorname{der}L}\iota(\widetilde{\theta}%
)$, and describe the higher brackets in $\mathfrak{g}(M,C)$ without reference
to the multisymplectization. Details are left to the reader.

\section{Jet spaces and $L_{\infty}$-algebras\label{SecJet}}

In this appendix, I provide explicit coordinate formulas for the $L_{\infty}%
$-algebras determined by the canonical multicontact structures on jet spaces.
Let $E$ be a $(n+m)$-dimensional manifold, and let $J^{k}=J^{k}(E,m)$ be the
space of $k$-jets of $m$-dimensional submanifolds of $E$, i.e.~equivalence
classes of tangency of $m$-dimensional submanifolds up to order $k$. There is
a tower of fiber bundles
\[
E=J^{0}\longleftarrow J^{1}\longleftarrow\cdots\longleftarrow J^{k-1}%
\longleftarrow J^{k}\longleftarrow\cdots.
\]
If $S\subset E$ is an $m$-dimensional submanifold, its $k$-jet at the point
$e\in S$ is denoted by $[S]_{e}^{k}$. The $k$-jet prolongation of $S$ is the
submanifold $S^{(k)}\subset J^{k}$ defined as
\[
S^{(k)}:=\{[S]_{e}^{k}:e\in S\}.
\]
An $m$-dimensional contact element $R$ in $J^{k}$ of the form $R=T_{y}S^{(k)}%
$, $y\in S^{(k)}$, is called an $R$-\emph{plane}.\emph{ }Notice that the
$R$-plane $T_{y}S^{(k)}$, $y=[S]_{e}^{k}$, is completely determined by
$y^{\prime}:=[S]_{e}^{k+1}$. Accordingly, it is also denoted by $R_{y^{\prime
}}$. The correspondence $y^{\prime}\mapsto R_{y^{\prime}}$ is bijective.
There is a canonical distribution $C$ on $J^{k}$, the \emph{Cartan
distribution}. For $y\in J^{k}$, the \emph{Cartan plane }$C_{y}$ is spanned by
$R$-planes at $y$. It is easy to see, for instance using local coordinates
(see below), that $C$ is maximally non-integrable, i.e.~the characteristic
distribution is trivial, hence it is a canonical multicontact structure on
$J^{k}$. Moreover, there is a canonical isomorphism of vector bundles
$N=TJ^{k}/C\simeq V$, where, for $y\in J^{k}$,
$V_{y}:=T_{\underline{y}}J^{k-1}/R_{y}$, and $\underline y$ is the projection of $y$ down to $J^{k-1}$.

When $n=k=1$, $C$ is a contact distribution. In the following, I will assume,
for simplicity, $n>1$. The case $n=1$ is somewhat exceptional but can be
treated in a very similar way. Multicontact vector fields on $J^{k}$ are
characterized by the Lie-B\"{a}cklund theorem. Namely, let $\phi
:E\to E$ be a (local) diffeomorphism. There exists a unique
(local) diffeomorphism $\phi^{(k)}:J^{k}\to J^{k}$ preserving the
Cartan distribution, such that diagram
\[%
\xymatrix{ J^k \ar[r]^-{\phi^{(k)}}
\ar[d] & J^k \ar[d] \\
E \ar[r]^-{\phi} & E}%
\]
commutes. Diffeomorphism $\phi^{(k)}$ is called the $k$\emph{-th jet
prolongation} of $\phi$ and it is defined as follows. For $y=[S]_{e}^{k}\in
J^{k}$, $\phi(y):=[\phi(S)]_{\phi(e)}^{k}$. Infinitesimally, let $X$ be a
vector field on $E$. There exists a unique multicontact vector field $X^{(k)}$
on $J^{k}$ which projects on $X$. The flow of $X^{(k)}$ is the $k$-th jet
prolongation of the flow of $X$, and $X^{(k)}$ is called the $k$\emph{-th jet
prolongation of} $X$. Lie-B\"{a}cklund theorem states that (when $n>1$) every
multicontact field on $J^{k}$ is of the kind $X^{(k)}$. Summarizing, (when
$n>1$) there are Lie algebra isomorphisms $\mathfrak{X}(E)\simeq
\mathfrak{X}_{C}\simeq\Gamma_{\mathrm{Ham}}(N)$, $X\mapsto X^{(k)}%
\mapsto\theta(X^{(k)})$.

Let $y_{0}=[S_{0}]_{e_{0}}^{k}$ be a point in $J^{k}$. On $E$ choose, around
$e_{0}$, coordinates $(x^{i},u^{\alpha})$, $i=1,\ldots,m$, $\alpha = 1, \ldots, n$
adapted to $S_{0}$, i.e.~such that $S_{0}$ is locally given by $S_{0}%
:u^{\alpha}=f_{0}^{\alpha}(x)$. There is a neighborhood $U$
of $y$ in $J^{k}$ such that every point $y$ of $U$ is of the form
$y=[S]_{e}^{k}$ with $S$ locally given by $S:u^{\alpha}=f^{\alpha}%
(x)$. One can coordinatize $U$ by \emph{jet coordinates
}$(x^{i},u_{I}^{\alpha})$ defined as
\[
x^{i}(y)=x^{i}(e),\quad u_{I}^{\alpha}(y)=\frac{\partial^{|I|}f^{\alpha}%
}{\partial x^{I}}(x(e)),
\]
where $I=i_{1}\cdots i_{\ell}$ is a multindex ``
denoting'' multiple partial derivatives, i.e.~$\frac
{\partial^{|I|}}{\partial x^{I}}:=\frac{\partial^{\ell}}{\partial x^{i_{1}%
}\cdots\partial x^{i_{\ell}}}$, and $|I|{}:=\ell\leq k$ is the\emph{ lenght of
the multiindex}. The Cartan distribution is then locally spanned by vector
fields
\[
D_{i}:=\frac{\partial}{\partial x^{i}}+\sum_{|I|{}<k}u_{Ii}^{\alpha}%
\frac{\partial}{\partial u_{I}^{\alpha}},\quad\frac{\partial}{\partial
u_{K}^{\alpha}},\quad|K|{}=k,
\]
and its annihilator $C^{0}$ is locally spanned by \emph{Cartan forms}%
\[
\vartheta_{J}^{\alpha}:=du_{J}^{\alpha}-u_{Ji}^{\alpha}dx^{i},\quad|J|{}<k.
\]
Accordingly, the projection $\theta:\mathfrak{X}(J^{k})\to
\Gamma(N)$ and the curvature $R:\Gamma(C)\times\Gamma(C)\to
\Gamma(N)$ are locally given by
\[
\theta=\sum_{|J|{}<k}\vartheta_{J}^{\alpha}\otimes\theta\left(  \frac
{\partial}{\partial u_{J}^{\alpha}}\right)  ,\quad\text{and\quad}R=\sum
_{|J|{}=k-1}du_{Ji}^{\alpha}|_{C}\wedge dx^{i}|_{C}\otimes\theta\left(
\frac{\partial}{\partial u_{J}^{\alpha}}\right)  .
\]
If $X$ is a vector field on $M$ locally given by $X=X^{i}\frac{\partial
}{\partial x^{i}}+X^{\alpha}\frac{\partial}{\partial u^{\alpha}}$, the
corresponding multicontact vector field $X^{(k)}$ is given by
\begin{equation}
X^{(k)}=X^{i}D_{i}+\sum_{|I|{}\leq k}D_{I}\chi^{\alpha}\frac{\partial
}{\partial u_{I}^{\alpha}},\quad\chi^{\alpha}:=X^{\alpha}-u_{i}^{\alpha}X^{i},
\label{14}%
\end{equation}
where $D_{i_{1}\cdots i_{\ell}}:=D_{i_{1}}\circ\cdots\circ D_{i_{\ell}}$. The
Hamiltonian section $\theta(X^{(k)})$ is locally given by%
\[
\theta(X^{(k)})=\sum_{|J|{}<k}D_{J}\chi^{\alpha}\theta\left(  \frac{\partial
}{\partial u_{J}^{\alpha}}\right)  .
\]
The multisymplectic structure of the multisymplectization $(\widetilde
{M},\widetilde{\omega})$ of $(J^{k},C)$ is locally given by
\begin{equation}
\widetilde{\omega}=dp\wedge\Theta-\sum_{|J|{}=k-1}du_{Ji}^{\alpha}\wedge
dx^{i}\wedge\Theta_{\alpha}^{J}, \label{15}%
\end{equation}
where $\Theta_{\alpha}^{J}:=i_{\partial/\partial u_{I}^{\alpha}}\Theta$. Local
Formulas (\ref{14}) and (\ref{15}) allow one to find coordinate expressions
for the higher brackets in $\mathfrak{g}_{\bullet}(J^{k},C)$. Namely, let
$\mu_{1},\ldots,\mu_{\ell}$ be Hamiltonian forms in $K_{\mathrm{Ham}}$,
and let $X_{1},\ldots,X_{\ell}$ be associated Hamiltonian vector fields on
$J^{k}$, with $\theta(X_{s})=\sum D_{J}\chi_{s}^{\alpha}\theta(\partial
/\partial u_{J}^{\alpha})$, $s=1,\ldots,\ell$. A straightforward computation
shows that%
\begin{align*}
&  \lambda_{\ell}(\mu_{1},\ldots,\mu_{\ell})\\
&  =%
{\textstyle\sum_{|I_{1}|,\ldots,|I_{k}|<k}}
D_{I_{1}}\chi_{1}^{\alpha_{1}}\cdots D_{I_{\ell}}\chi_{\ell}^{\alpha_{\ell}%
}\left(  (-)^{\ell}p%
{\textstyle\sum_{|J|{}=k-1}}
du_{Ji}^{\alpha}\wedge dx^{i}\wedge\Theta_{\alpha_{1}\cdots\alpha_{\ell}%
\alpha}^{I_{1}\cdots I_{\ell}J}{}-dp\wedge\Theta_{\alpha_{1}\cdots\alpha
_{\ell}}^{I_{1}\cdots I_{\ell}}\right) \\
&  \quad\ \left.  +%
{\textstyle\sum_{s=1}^{\ell}}
(-)^{s}pD_{I_{1}}\chi_{1}^{\alpha_{1}}\cdots\widehat{D_{I_{s}}\chi_{s}%
^{\alpha_{s}}}\cdots D_{I_{\ell}}\chi_{\ell}^{\alpha_{\ell}}\left(
{\textstyle\sum_{|I|<k}}
(\tfrac{\partial}{\partial u_{I}^{\alpha}}D_{I}\chi_{s}^{\alpha}%
)\Theta_{\alpha_{1}\cdots\widehat{\alpha_{s}}\cdots\alpha_{\ell}}^{I_{1}%
\cdots\widehat{I_{s}}\cdots I_{\ell}}\right.  \right. \\
&  \quad\ \left.  +%
{\textstyle\sum_{|J|{}=k-1}}
\left(  X_{s}^{i}du_{Ji}^{\alpha}-D_{Ji}\chi_{s}^{\alpha}dx^{i}\right)
\wedge\Theta_{\alpha_{1}\cdots\widehat{\alpha_{s}}\cdots\alpha_{\ell}\alpha
}^{I_{1}\cdots\widehat{I_{s}}\cdots I_{\ell}J}\right)  ,
\end{align*}
where $\Theta_{\alpha_{1}\cdots\alpha_{\ell}}^{I_{1}\cdots I_{\ell}%
}:=i_{\partial/\partial u_{I_{1}}^{\alpha_{1}}}\cdots i_{\partial/\partial
u_{I_{\ell}}^{\alpha_{\ell}}}\Theta$.

\quad
\end{document}